\documentclass[12pt,a4paper]{amsart}

\usepackage{amsmath, amssymb, amsthm}
\usepackage{geometry}
\usepackage[hidelinks]{hyperref}
\usepackage{mathrsfs}
\usepackage{color}

\usepackage{amsfonts, mathrsfs, amscd}

\usepackage{amsbsy}
\usepackage{amscd}
\usepackage[mathscr]{eucal}

\allowdisplaybreaks
\usepackage{verbatim}

\theoremstyle{definition}
\newtheorem{theorem}{Theorem}[section]
\newtheorem*{conjecture}{LG-LG mirror symmetry conjecture}

\newtheorem{lemma}[theorem]{Lemma}
\newtheorem{definition}[theorem]{Definition}
\newtheorem{remark}[theorem]{Remark}

\newtheorem{fjraxiom}{FJR}

\newcommand{\SL}{{\mathrm{SL}}}

\newcommand{\Hom}{\mathrm{Hom}}

\newcommand{\Jac}{\mathrm{Jac}}
\newcommand{\Fix}{\mathrm{Fix}}

\newcommand{\id}{\mathrm{id}}

\newcommand{\bs}{{\bf s}}
\newcommand{\bz}{{\bf z}}

\def\A{{\mathcal A}}

\def\C{{\mathcal C}}

\def\E{{\mathcal E}}
\def\F{{\mathcal F}}

\def\H{{\mathcal H}}

\def\M{{\mathcal M}}

\def\O{{\mathcal O}}

\def\T{{\mathcal T}}

\def\Hom{{\mathrm{Hom}}}
\def\MF{{\mathrm{MF}}}

\def\p{\partial }
\def\ns{{\nabla}\hspace{-1.4mm}\raisebox{0.3mm}{\text{\footnotesize{\bf /}}}}

\newcommand{\im}{\mathsf{i}}
\newcommand{\sH}{\H}
\DeclareMathOperator{\cFix}{Fix}
\newcommand{\jw}{\mathfrak{j}}
\newcommand{\set}[1]{\left\{#1\right\}}  % a set
\newcommand{\br}[1]{\left\langle#1\right\rangle}  % angle brackets

\newcommand{\ccHH}{{\mathsf{HH}}}

\newcommand{\RR}{{\mathbb{R}}}
\newcommand{\CC}{\mathbb{C}}

\newcommand{\HC}{\mathrm{HC}}

\newcommand{\GM}{{\mathrm{GM}}}

\renewcommand{\S}{{\mathcal S}}
\def\calS{{\mathcal S}}

\title{Landau--Ginzburg mirror symmetry for ADE singularities}
\author{Alexey Basalaev}
\date{\today}

\begin{document}
\maketitle

\begin{abstract}
We prove Landau--Ginzburg mirror symmetry conjecture for ADE singularities with the nontrivial symmetry groups.
In particular, we consider $(f,G)$ with $f \in \CC[z_1,z_2,z_3]$ defining ADE singularity and $G$ --- nontrivial group of diagonal symmetries of $f$.
We show that orbifold Saito theory of $(f,G)$ is isomorphic to the Fan-Jarvis-Ruan-Witten theory of its Berglund--H\"ubsch--Henningson dual pair $(\widetilde f, \widetilde G)$.
\end{abstract}

%   \setcounter{tocdepth}{1}
%   \tableofcontents

\section{Introduction}

Mirror symmetry for Landau--Ginzburg models relates two different theories associated to a quasihomogeneous singularity. On the A-side, the Fan--Jarvis--Ruan--Witten (abbreviated by FJRW) theory associates to a polynomial together with a suitable group of its diagonal symmetries a cohomological field theory. On the B-side, K.~Saito's theory of primitive forms equips the universal unfolding of an isolated hypersurface singularity with the structure of a Frobenius manifold. Together with the Givental formalism, this gives the corresponding Saito--Givental theory. In a most general context not only the A-side, but the B-side as well should be considered with the certain choice of a symmetry group.
A fundamental problem in Landau--Ginzburg mirror symmetry is to understand when these two theories are related by a mirror isomorphism.

\subsection{FJRW and Saito theories}
Saito theory is associated to a polynomial $f$, defining an isolated singularity. This is an important branch of singularity theory developed in the last quarter of XX (cf. \cite{SK1, SK2, SK3, SM1, SM2}).

FJRW theory is associated to a quasihomogeneous polynomial $f$ defining an isolated singularity, equipped with an additional data - a group of symmetries $G$ of the polynomial $f$. Namely, FJRW theory is associated to a pair $(f,G)$. In the case of $f$ being ADE polynomials, FJRW theory gave solution to the big Witten's conjecture (cf. \cite{FJR13, FJR07}).

Despite the fact that both Saito and FJRW theories have a polynomial defining isolated singularity in their input, two theories are totally different. Saito theory is analytical, while FJRW is an example of modern enumerative theory. The unifying notion that allows one to compare two is {\it Dubrovin--Frobenius manifold} (\cite{Dubrovin96}). This was the observation of B.~Dubrovin that Saito theory of ADE singularities allows one to construct a Dubrovin--Frobenius manifold. FJRW theory defines a Dubrovin--Frobenius manifold by taking the genus $0$ part of its cohomological field theory. In what follows we assume FJRW theory to be isomorphic to Saito theory if their Dubrovin--Frobenius manifolds are isomorphic.

\subsection{Orbifold Saito theory}
This is transparent from the beginning that the A-side and B-side of mirror symmetry have non-symmetric input. Namely, FJRW theory is associated to a pair $(f,G)$ while the Saito theory is associated to a polynomial $f$ itself. Modern developmend of mirror symmetry suggested that the B--side should be considered with the symmetry group as well. There were many attempts to construct an ``orbifold'' version of Saito theory, associated to a pair $(f,G)$ (cf. \cite{BT22, BTW23, BR26}). In this note we use the construction of Junwu Tu \cite{T21a,T21b} in order to define the B--side Dubrovin--Frobenius manifold of $(f,G)$. We choose to call it \textit{orbifold Saito theory}. This follows immediately from the constuction that our results hold true for the definition of \cite{BR26} as well.

\subsection{Landau--Ginzburg mirror symmetry conjecture}
Let
\[
f(z_1,\ldots,z_N) = \sum_{i=1}^N c_i \prod_{j=1}^N z_j^{e_{ij}}, \qquad c_i\in\mathbb C^*,
\]
be a polynomial with $N$ monomials in $N$ variables. It's called an \emph{invertible polynomial} if the matrix $E_f = \{e_{ij} \}_{1\leq i,j\leq N}$ is invertible and $f$ is nondegenerate, i.e. has an isolated critical point at the origin.

The maximal group of diagonal symmetries of $f$ is
\[
G_f := \left\{ (\lambda_1,\ldots,\lambda_N)\in(\mathbb C^*)^N \ | \ f(\lambda_1z_1,\ldots,\lambda_Nx_N)=f(z_1,\ldots,z_N) \right\}.
\]
It's always non-empty for invertible $f$. For any $G \subseteq G_f$ the pair $(f,G)$ is called a Landau--Ginzburg model or just LG model.

Associate to $f$ a new polynomial $\widetilde f$ by transposing the exponent matrix
\[
\widetilde f(z_1,\ldots,z_N)
:=
\sum_{i=1}^N
\prod_{j=1}^N z_j^{e_{ji}}.
\]
Namely, $E_{\widetilde f} = E_f^T$. This construction is due to Berglung and H\"ubsch \cite{BH93}.
For any $G \subseteq G_f$ Berglund and Henningson defined (cf. \cite{BH95}) the dual group $\widetilde G \subseteq G_{\widetilde f}$ by
$
\widetilde G = \Hom(G_f/G,\CC^*).
$
Thus the BHH duality, named after the inventors Berglund, H\"ubsch and Henningson reads
\[
(f,G) \longleftrightarrow (\widetilde f,\widetilde G).
\]

\begin{conjecture}
Let the polynomial $f$ define an invertible singularity and let
$G\subseteq G_f \cap \SL(N,\CC)$.

Then the orbifold Saito theory of $(f, G)$ is isomorphic to the FJRW theory of $(\widetilde f, \widetilde G)$.
\end{conjecture}

For $f$ defining ADE singularity and $G = \{ \id \}$, this conjecture was proved by \cite{FJR13,HLSW, HPSV, MS16, LLS17, K10}.
The purpose of the present paper is to extend this result to nontrivial symmetry groups. Our main result is the following.

\begin{theorem}
LG-LG mirror symmetry conjecture holds for ADE singularities.
\end{theorem}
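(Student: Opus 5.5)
The plan is to reduce the statement to a finite list of pairs $(f,G)$ and, for each, identify the two Dubrovin--Frobenius manifolds through their reconstruction data. First I classify the cases. ADE singularities in three variables are given by invertible polynomials: $A_n$ as a Fermat/chain sum $z_1^{n+1}+z_2^2+z_3^2$ (or loop/chain variants with $z_2z_3$), $D_n$ as $z_1^{n-1}+z_1z_2^2+z_3^2$, $E_6$ as $z_1^3+z_2^4+z_3^2$, $E_7$ as $z_1^3+z_1z_2^3+z_3^2$, and $E_8$ as $z_1^3+z_2^5+z_3^2$. For each I compute $G_f\cap\SL(3,\CC)$ and list its nontrivial subgroups $G$. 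These are small abelian groups generated by elements such as $(-1,-1,1)$ or $(\zeta,\zeta^{-1},1)$. The $E_6$ and $E_8$ cases admit essentially only the involutions acting on two of the quadratic variables, when available. For each $G$ I then write down the BHH dual $(\widetilde f,\widetilde G)$, using that $\widetilde G$ contains the exponential grading element $J$ precisely because $G\subseteq\SL$. This makes the FJRW theory of $(\widetilde f,\widetilde G)$ well defined.

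Second, on the B-side I compute Tu's orbifold Saito state space. It is $\bigl(\bigoplus_{g\in G}\Jac(f^g)\,d\bz_{\Fix g}\bigr)^G$, with its $G$-twisted product and residue pairing. In the ADE case this is again of ADE type: the orbifold of an ADE singularity by an $\SL$ subgroup behaves like the simple singularity of the quotient, e.g.\ $A_{2k-1}/\mathbb Z_2\sim D_{k+1}$ and $D_{n}/\mathbb Z_2\sim A$. On the A-side I compute the FJRW state space $\bigl(\bigoplus_{h\in\widetilde G}\Jac(\widetilde f^h)\,d\bz\bigr)^{\widetilde G}$ with its bidegrees. I match the two graded Frobenius algebras by the standard Krawitz-type mirror map. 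In each case both should be the Milnor ring of a specific simple singularity $X\in\{A_n,D_n,E_n\}$, with the same central charge $\hat c<1$ and the same grading.

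Third, I use semisimplicity and reconstruction. For $\hat c<1$ and a Frobenius algebra generated by a small set of degree-one-ish generators, the genus-zero potential is determined by the algebra, the pairing, the grading (the Euler field) and finitely many low-point correlators. This is Dubrovin's reconstruction for semisimple conformal Frobenius manifolds, or the WDVV-based reconstruction lemma of \cite{FJR13}. Moreover, the Frobenius manifold of a simple singularity is uniquely determined by these data up to the choice of nonzero basic correlators. I therefore compute, on both sides, the few needed $3$- and $4$-point invariants. On the B-side they come from Tu's formulas. On the A-side the $3$-point invariants follow from concavity and the index axiom. The $4$-point invariants follow from concavity, Chiodo's formula or the pairing-with-WDVV trick, and I check that they are nonzero and match after rescaling. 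It then follows that both theories equal the ADE Dubrovin--Frobenius manifold of the same type $X$, and hence coincide.

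The main obstacle I expect is the A-side $4$-point invariants in the non-concave situations. These arise when $\widetilde G$ has broad sectors, e.g.\ for the $D_n$ chain with sectors fixing $z_2$. There I would first try to avoid computation, using WDVV together with the selection rules so that only concave correlators are needed. If that fails, I would invoke known results for $(\widetilde f,\widetilde G)$ identifying FJRW theory with Saito theory of a dual ADE polynomial, as in \cite{FJR13,HLSW,LLS17}. I would then reduce to the $G=\{\id\}$ cases via that identification.
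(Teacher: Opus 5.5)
\textbf{There is a genuine gap on the B-side.}

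Your third step needs the $3$- and $4$-point correlators of the orbifold Saito Dubrovin--Frobenius manifold of $(f,G)$, and you say these ``come from Tu's formulas''. Tu's construction does not supply such formulas. It is defined through $\HC^-_\ast$ of the universally deformed category $\MF_G(f)$, with its Getzler--Gauss--Manin connection, the Mukai pairing and a primitive element. The twisted-sector deformation directions are not even polynomial unfoldings of $f$. Extracting flat coordinates and $4$-point functions from these data is exactly the hard problem, and you give no way to do it.

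The reconstruction lemma cannot replace this input. The state space, grading and pairing do not determine the Frobenius manifold: already for $A_2$, the potentials $\tfrac12 t_0^2t_1+c\,t_1^4$ with $c=0$ and with $c\neq0$ share all of them. So you need at least the non-vanishing of specific B-side $4$-point correlators, and you have no source for it. Appealing to semisimplicity does not help either. You would first have to prove generic semisimplicity of Tu's structure, and Dubrovin's semisimple reconstruction runs through Stokes/monodromy data, not through the algebra at the origin. Your heuristic that $(f,G)$ ``behaves like the simple singularity of the quotient'' is justified only at the level of Frobenius algebras, not of the full VSHS.

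\textbf{The missing idea is the one the paper uses to make the B-side computation-free.}

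For each $(f,G)$, take $\overline f$ to be the chart of a crepant resolution of $\CC^3/G$ that contains all critical points. Orlov's locality of the singularity category gives $\MF_G(f)\simeq\MF(\overline f)$. Morita invariance of $\HC^-$, compatible with the Getzler--Gauss--Manin connection and the Mukai pairing (Shklyarov), then identifies the two categorical VSHS and their primitive forms. For the trivial group, the categorical VSHS is Saito's classical one. Hence the orbifold Saito theory of $(f,G)$ is the classical Saito theory of $\overline f$, whose correlators are known.

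Only the A-side then has to be computed. The paper does this case by case much as you propose: sum-of-singularities axiom, quasihomogeneous changes of variables, concavity, WDVV for broad insertions, and FJR's Theorem~6.2.10, all compared against Saito theory of $\overline f$.

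\textbf{One further inaccuracy.} Your expectation that every case yields the Milnor ring of a single simple singularity is false. For $f=z_1^{k+1}+z_2^2+z_3^2$ with $G=\langle\tfrac12(0,1,1)\rangle$, the twisted sector squares to a nonzero constant, so the algebra is $\Jac(z_1^{k+1})\oplus\Jac(z_1^{k+1})$. Correspondingly, $\overline f=y_1^{k+1}+y_2+y_2y_3^2$ has two critical points. Both theories are of type $A_k\times A_k$ there, so your uniform ``same type $X$'' conclusion must be adjusted in that case.
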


Namely, we extend the previously known ADE cases with trivial symmetry group to all nontrivial symmetry groups.

Writing ADE singularity in three variables all non--trivial admissible symmetry groups were considered in \cite{BTW23}. For every such pair $(f,G)$ the authors of loc.cit. constructed the polynomials $\overline f$ such that the pair $(f,G)$ is orbifold equivalent to $(\overline f ,\{ \id \})$ (see Section~5 of \cite{BTW23})). We list these polynomials in Table~\ref{tableADE}.

We prove our mirror theorem in two steps.

\begin{theorem}\label{theorem: explicit}
 For every pair $(f,G)$ from Table~\ref{tableADE} holds:

 \begin{enumerate}
 \item[(i)] the orbifold Saito theory of $(f,G)$ is isomorphic to classical Saito theory of $\overline f$.

 \item[(ii)] the FJRW theory of $(\widetilde f, \widetilde G)$ is isomorphic to the classical Saito theory of $\overline f$.
 \end{enumerate}
\end{theorem}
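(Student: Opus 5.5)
The plan is to split the statement into its two parts and in both reduce to a comparison of Frobenius algebras plus a reconstruction argument. Two semisimple, or more generally generically semisimple, conformal Frobenius manifolds with the same charge are isomorphic once their initial data agree and are reconstructed from the same generators. The initial data are the graded vector space with its pairing and the genus-zero primary 3-point (and a few 4-point) correlators. For ADE singularities the classical Saito Frobenius manifold of $\overline f$ is the Dubrovin--Frobenius manifold on the orbit space of the corresponding Weyl group. Its uniqueness among polynomial Frobenius manifolds with the given degrees (Dubrovin's classification, Hertling's theorem) will be the main rigidity tool.

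For part (i) I would use the result of \cite{BTW23}, Section~5, that $(f,G)$ is orbifold equivalent to $(\overline f,\{\id\})$. The steps are as follows.
\begin{enumerate}
\item[1.] Compute Tu's $G$-twisted Hochschild cohomology $\ccHH^*(\MF_G(f))$. It decomposes over $g\in G$ into $G$-invariants of $\Jac(f^g)\otimes$ (generator of the fixed locus).
\item[2.] Check that its dimension, bigrading and residue pairing match those of $\Jac(\overline f)$. This amounts to matching the number of simple roots and the Coxeter exponents.
\item[3.] Use that orbifold equivalence induces an equivalence of the categories $\MF_G(f)\simeq \MF(\overline f)$. By Tu's theorem \cite{T21a,T21b}, this equivalence identifies the categorical primitive forms and hence the Frobenius manifolds (with the normalization fixed by the grading).
\end{enumerate}
If the categorical argument is unavailable for a given case, the fallback is to compute the product on the twisted sectors explicitly, using the pairing and the $G$-invariance. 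Then one applies the Dubrovin--Hertling classification: a polynomial Frobenius manifold with the weights of $A_n$, $D_n$ or $E_{6,7,8}$ is unique.

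For part (ii) I would proceed case by case through Table~\ref{tableADE}. For each row I compute $\widetilde f$ and $\widetilde G$ from $E_f^T$, and then the FJRW state space as the sum of $\widetilde G$-invariants of the narrow and broad sectors, with degree shifts given by the ages. I then verify that the central charge, the multiset of degrees and the pairing agree with those of $\Jac(\overline f)$. Next I compute the needed 3-point and 4-point genus-zero correlators using the FJRW axioms:
\begin{itemize}
\item dimension and selection rules, which kill most correlators;
\item concavity, for narrow insertions;
\item the index-zero and composition axioms, which determine the remaining ones up to nonzero constants, absorbed by rescaling.
\end{itemize}
Once the primary ring is identified with $\Jac(\overline f)$ and the product is shown to be generated in degree one (or the flat identity together with one deformation parameter), Witten--Dijkgraaf--Verlinde--Verlinde reconstruction shows the FJRW Frobenius manifold is determined by these finitely many correlators. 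The polynomial classification then again yields the Saito manifold of $\overline f$.

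I expect the main obstacle to be the broad sectors in part (ii). When $\widetilde G$ contains elements fixing a variable, the concavity axiom does not apply. The relevant 4-point correlators (for example those producing the $D$ or $E$ type structure constants) must then be obtained indirectly. My first attempt is to pick the correlator whose vanishing or non-vanishing is forced by WDVV together with the known ring structure. Failing that, I would use the mirror-symmetric computation of the trivial-group case in \cite{FJR13,HLSW} for a related pair, exploiting that the nonvanishing of one 4-point function suffices to pin down the Frobenius manifold up to the classification.
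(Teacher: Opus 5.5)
Your part (i) is the paper's argument. The orbifold equivalence $\MF_G(f)\simeq\MF(\overline f)$, Morita invariance of $\HC^-_\ast$, and its compatibility with the Getzler--Gauss--Manin connection and the Mukai pairing \cite{Sn20} together identify the two categorical VSHS, and hence their primitive forms. Steps 1--2 are superfluous. But you skipped a step that is needed: the argument only reaches the \emph{categorical} Saito theory of $\overline f$. To get the classical one you need the comparison Theorem~\ref{theorem: categorical saito classical}, which identifies the categorical VSHS of $\MF(\overline f)$ with the Brieskorn lattice, Gauss--Manin connection and higher residue pairing. Your fallback cannot work. The product on $\ccHH^\ast(\MF_G(f))$ is only the Frobenius algebra at the origin, and the purely cubic potential built from it is a polynomial Frobenius manifold with the same grading that is not semisimple.

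Part (ii) has the paper's skeleton: ring structure, reconstruction Lemma~\ref{lemma: reconstruction in genus zero}, concavity for narrow insertions and WDVV for broad ones. The paper takes the ring from \cite{FJJS} and \cite{BTW23} rather than computing broad 3-point correlators. Two of your steps do not go through as written.

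\textbf{The closing step.} The Dubrovin--Hertling classification needs generic semisimplicity and irreducibility. You never establish either: the cubic example above has the right ring and grading, and row~1 is of type $A_k\times A_k$, which is reducible. The paper closes instead with Theorem~6.2.10 of \cite{FJR13}: for an ADE ring with its grading and pairing, the manifold is fixed up to rescaling by the non-vanishing of specific 4-point correlators. For $D_4$ (row~2 with $k=3$) and $E_6$ (row~5) there are two such correlators, attached to two primitive generators ($\phi_1,\tau$, resp.\ $\alpha_2,\alpha_3$). So ``one 4-point function suffices'' is false there.

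\textbf{Broad 4-point correlators.} The index-zero axiom applies only to genus-zero 3-point correlators. It therefore cannot give the non-vanishing of broad 4-point correlators, which is the real crux. The paper's tool here, missing from your plan, is the sum of singularities axiom.
\begin{itemize}
\item It splits off $2$-spin factors in rows 1, 2, 4 and 7. This reduces them to Theorem~\ref{theorem: FJR-ADE}, to \cite{FFJMR}, or to $(z_1^{2k}+z_2^2,J)$.
\item In $(z_1^{2k}+z_2^2,J)$, WDVV with the concave value $\frac{1}{2k}$ gives $\langle\tau,\tau,\phi_1,\phi_2\rangle_{0,4}=-\frac16$ for $k=3$.
\item In row~5 it factors $c_1$ through the three-dimensional theory of $(z_1^4+z_3^2,J)$, where WDVV forces $\lambda^2=1$. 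It factors $c_2$ through the $3$-spin theory.
\item Rows 6 and 8 are handled by quasihomogeneous changes of variables.
\end{itemize}

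Your plan to recompute $(\widetilde f,\widetilde G)$ from $E_f^T$ is worth keeping. In row~6 the mirror is $(z_1^2+z_2^2z_3+z_3^{2k},J)$ with $(\frac12,0,0)\notin J$. So neither splitting nor a change to Fermat form is available there; the paper's Case~6 in fact treats $f$ instead of $\widetilde f$. In that row one really has to compute, with the broad $\jw^{2k}$-sector playing the role of $\tau$.
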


We prove part~(i) in Section~\ref{section: proof of part i} and part (ii) in Section~\ref{section: proof of part ii}.

In \cite{K10} M.~Krawitz proved that LG-LG mirror symmetry conjecture holds as the vector space isomorphism. In \cite{FJJS} A.~Francis, T.~Jarvis, D.~Jhonson and R.~Suggs that LG-LG mirror symmetry conjecture holds as the isomorphism of Frobenius algebras. To our knowledge the theorem above is the first LG-LG mirror symmetry isomorphism of Dubrovin--Frobenius manifolds where the B--model is given with the nontrivial symmetry group. Next to \cite{BT22} this is the second example of such mirror symmetry isomorphism.

\begin{remark}
Cohomological field theory of the orbifold Saito theory is not defined. However this can be done by employing A.~Givental's reconstruction \cite{G01}. This is an easy consequence of the theorem above and the result of C.~Teleman \cite{Tc12} that LG mirror symmetry conjecture holds as the isomorphism of cohomological field theories as well.
\end{remark}

\begin{table}[h]
{\small
\begin{center}
\begin{tabular}{l||l|c||c| c}
&$f(z_1,z_2,z_3)$ & $G$ & $\overline{f}(y_1,y_2,y_3)$ & Type
\\
\hline
1.&$z_1^{k+1}+z_2^2+z_3^2$,\quad $k \ge  1$ & $\left<\frac{1}{2}(0,1,1)\right>$ & $y_1^{k+1}+y_2+y_2y_3^2$ & $A_k \times A_k$
\\
2.&$z_1^{2k}+z_2^2+z_3^2$,\quad $k \ge  1$ & $\left<\frac{1}{2}(1,0,1)\right>$ & $y_1^2+y_2^k+y_2y_3^2$ & $D_{k+1}$
\\
3.&$z_1^{2k}+z_2^2+z_3^2$,\quad $k \ge  1$ & $\left<\frac{1}{2}(0,1,1),\frac{1}{2}(1,0,1)\right>$ & $y_1^ky_2^k+y_1y_3+y_2y_3$ & $A_{2k-1}$
\\% $y_1^k+y_1y_2+y_2y_3^2$\\
4.&$z_1^{3}+z_2^3+z_3^2$ & $\left<\frac{1}{3}(1,2,0)\right>$ & $y_1^2+y_3y_2^2+y_2y_3^2$ & $D_4$
\\
5.&$z_1^{4}+z_2^3+z_3^2$ & $\left<\frac{1}{2}(1,0,1)\right>$ & $y_1^3+y_2^2+y_2y_3^2$ & $E_6$
\\
6.&$z_1^2+z_2^2+z_2z_3^{2k}$,\quad $k \ge  1$ & $\left<\frac{1}{2}(1,0,1)\right>$ &  $y_1^2+y_1y_2^k+y_2y_3^2$ & $D_{2k+1}$
\\
7.&$z_1^2+z_2^2+z_2z_3^{2k+1}$,\quad $k \ge  1$ & $\left<\frac{1}{2}(0,1,1)\right>$ & $y_1^2+y_3y_2^2+y_2y_3^{k+1}$ & $D_{2k+2}$
\\
8.&$z_1^2+z_2^{k-1}+z_2z_3^2$,\quad $k \ge  4$ & $\left<\frac{1}{2}(1,0,1)\right>$ & $y_1^{k-1}+y_1y_2+y_2y_3^2$ & $A_{2k-3}$
\\
\hline
\end{tabular}
\end{center}
}
\smallskip
\caption{$(f,G) \sim_{orb} (\overline{f},\{\id\})$}\label{tableADE}
\end{table}

\subsection{Acknowledgements}
The research leading to these results has received funding from the Basic Research Program at the National Research University Higher School of Economics.

\section{Dubrovin--Frobenius manifold}
A Dubrovin--Frobenius manifold is the set of data $(M,\circ,\eta,e)$, where $M$ is a complex manifold, $\circ$ is a fiberwise $\O_M$--bilinear commutative and associative product on its holomorphic tangent sheaf $\T_M$, $\eta$ is a non--degenerate $\O_M$--bilinear form on $\T_M$, such that the Frobenius algebra property $\eta(X \circ Y, Z) = \eta(X,Y\circ Z)$ holds for any $X,Y,Z \in \T_M$. It is required that the Levi-Civita connection $\ns$ of $\eta$ is flat, and the unit vector field $e$ of the product is $\ns$--flat. Let $t_\alpha$ be the corresponding flat coordinates and $\p / \p t_\alpha$ the dual vector fields. Denote by $C := \sum_\alpha \left( \frac{\p}{\p t_\alpha} \circ \right) dt_\alpha$ the $\T_M$--endomorphism-valued $1$--form. It is required that $\ns C = 0$. Moreover both $\eta$ and $\circ$ are required to be quasihomogeneous with respect to some Euler vector field $E$ (see \cite{Dubrovin96}).

The structure of every Dubrovin--Frobenius manifold can be encoded by just one function $\F = \F(t_1,\dots,t_\mu)$ called its potential.
Denote by $\eta^{ij}$ the components of $\eta^{-1}$.
The associativity of the product $\circ$ then implies that $\F$ is subject to a big system of PDEs called the \textit{WDVV equation}.
\begin{equation}
\sum_{a,b}
\frac{\partial^3\mathcal F}
{\partial t^i\partial t^j\partial t^a}
\eta^{ab}
\frac{\partial^3\mathcal F}
{\partial t^b\partial t^k\partial t^\ell}
=
\sum_{a,b}
\frac{\partial^3\mathcal F}
{\partial t^i\partial t^k\partial t^a}
\eta^{ab}
\frac{\partial^3\mathcal F}
{\partial t^b\partial t^j\partial t^\ell},
\label{eq:wdvv}
\end{equation}
that should hold for any fixed $(i,j,k,l)$.

\subsection{Dubrovin connection}
To any Dubrovin--Frobenius manifold one can associate the connection $\nabla$ on $M \times \CC^\ast_z$ given by
\begin{align}\label{eq: Dubrovin connection-1}
    \nabla_X := \ns_X + \frac{1}{z} C_X, \quad \nabla_{\frac{d}{dz}} := \frac{d}{dz} + \frac{1}{z} \left( B_0 + \frac{B_\infty}{z} \right)
\end{align}
for $C_X(Y) := X \circ Y$, $B_0(Y) := E \circ Y$ and the diagonal grading operator $B_\infty$.

This new connection is flat. It provides an important piece of data of a Dubrovin--Frobenius manifold.
The converse way to construct a Dubrovin--Frobenius manifold from the meromorphic connection is given by the VSHS of S. Barannikov (see \cite{B00,B01,Sc07}).

\subsection{Reconstruction lemma}
Denote $V := T_0M = \CC\langle e_1,\dots,e_\mu \rangle$, the tangent space at $t_1=\dots=t_\mu = 0$.
The $k$--point correlator of a Dubrovin--Frobenius manifold is the complex number
\[
    \langle e_{i_1},\dots,e_{i_k} \rangle^\F_{0,k} := \frac{\p^k \F}{\p t_{i_1} \dots \p t_{i_k} } \mid_{t_1 = \dots = t_\mu = 0}.
\]
The three--point correlators endow $V$ with the $\CC$--algebra structure
\[
    e_i \circ e_j = \sum_{k,m=1}^\mu \langle e_i, e_j, e_m \rangle_{0,3} \eta^{mk} e_k.
\]
This is important to stress that $\circ$ does not depend on $t$.

The Euler vector field gives the grading $\deg$ on $V$. This grading is compatible with the product.

\begin{definition}
The vector $\gamma \in V$ is called \emph{primitive} if there is no $\gamma_1,\gamma_2 \in V$, such that  $\gamma = \gamma_1 \circ \gamma_2$ and $0 < \deg(\gamma_1) \le \deg(\gamma_2) < \deg(\gamma)$. We call a correlator $\langle \dots \rangle_{0,n}^\F$ \emph{basic} if it involves at most two non-primitive insertions.
\end{definition}

The following lemma will be used later on.

\begin{lemma}[Lemma~6.2.8 in \cite{FJR13}]\label{lemma: reconstruction in genus zero}
Fix a Dubrovin--Frobenius manifold with the central charge $\hat c$.
If $\deg(\alpha) \le \hat c$ for all vectors $\alpha \in V$ then all correlators of this Dubrovin--Frobenius manifold are uniquely determined by $\eta$ and the $n$-point correlators with $n \le N_{min}$.
\[
    N_{min} := \left\lfloor 2 + \frac{1 + \hat c}{1 - P} \right\rfloor, \quad P:= \max_{\substack{v \in V \\ v \text{ is primitive}}} \deg(v).
\]
\end{lemma}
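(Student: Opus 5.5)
The statement is Lemma~6.2.8 of \cite{FJR13}, and I would reproduce its argument: induction on the number of insertions, driven by the WDVV equation \eqref{eq:wdvv} together with the degree (dimension) constraint coming from quasihomogeneity.

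\textbf{Step 1: the selection rule.} Quasihomogeneity of $\F$ with respect to $E$ forces a linear constraint on nonzero correlators. After normalizing the degrees of flat coordinates so that $\deg(e_1)=0$ for the unit, $\deg$ ranges in $[0,\hat c]$, and pairing degrees sum to $\hat c$, the constraint reads: $\langle \alpha_1,\dots,\alpha_n\rangle_{0,n}\neq 0$ only if
\[
\sum_{i=1}^n \deg(\alpha_i) = \hat c + n - 3 .
\]

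\textbf{Step 2: reduction to basic correlators.} Suppose a correlator with $n\ge 4$ insertions contains a non-primitive insertion $\gamma=\gamma_1\circ\gamma_2$ with $0<\deg\gamma_i<\deg\gamma$. Apply the WDVV equation to the quadruple $(\gamma_1,\gamma_2,\alpha,\beta)$, where $\alpha,\beta$ are two other insertions, and combine it with the usual splitting of the remaining insertions. This expresses
\[
\langle \gamma_1\circ\gamma_2,\alpha,\beta,\dots\rangle
\]
in two kinds of terms:
\begin{itemize}
\item correlators $\langle \gamma_1,\gamma_2\circ\alpha,\beta,\dots\rangle$ with the same number of insertions but a strictly smaller degree of the factored insertion;
\item products of correlators with strictly fewer insertions, glued via $\eta^{ab}$.
\end{itemize}
Inducting on $n$ and, for fixed $n$, lexicographically on the degrees of the non-primitive insertions, every correlator reduces to basic ones, i.e.\ those with at most two non-primitive insertions. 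This is standard, and I will only sketch it.

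\textbf{Step 3: bounding the length of basic correlators.} Let $\langle\alpha_1,\dots,\alpha_n\rangle$ be a nonzero basic correlator. At least $n-2$ of its insertions are primitive, so each of those has degree at most $P$. The two remaining insertions have degree at most $\hat c$ each, by hypothesis. Step 1 then gives
\[
\hat c+n-3 \le (n-2)P + 2\hat c .
\]
This does not yet give the stated bound. To sharpen it, I would first remove unit insertions using the string equation. I would then use that a correlator with two non-primitive insertions can be further reduced via WDVV unless one of them pairs appropriately, so that only one insertion of degree up to $\hat c$ remains essential. That yields
\[
\hat c+n-3\le (n-1)P+\hat c - \text{(gain)},
\]
and in the form used by FJRW,
\[
n-2 \le (n-1)P + \dots,
\]
which rearranges to $n\le 2+\frac{1+\hat c}{1-P}$. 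Hence every correlator with $n>N_{min}$ vanishes or is determined by shorter ones.

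\textbf{Main obstacle.} I expect the precise bookkeeping in Step 3 to be the hard part: getting exactly $N_{min}=\lfloor 2+\frac{1+\hat c}{1-P}\rfloor$ requires the careful FJRW argument that at most one insertion beyond the primitive ones contributes large degree. I would follow the proof of Lemma~6.2.8 in \cite{FJR13} verbatim there, noting that it uses only the WDVV equation, homogeneity and the existence of a flat unit. All of these hold for any Dubrovin--Frobenius manifold, so the lemma transfers directly from FJRW theory to this setting.
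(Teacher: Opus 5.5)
Your Steps~1 and~2 follow the WDVV-plus-selection-rule argument of \cite{FJR13}. The paper itself only cites that lemma. Step~3, however, contains a genuine error. The inequality you derive for a nonzero basic correlator,
\[
\hat c+n-3\le (n-2)P+2\hat c,
\]
is not too weak. It is equivalent to $(n-2)(1-P)\le 1+\hat c$. Using $P<1$ (implicit in the definition of $N_{min}$), this gives $n\le 2+\frac{1+\hat c}{1-P}$, and since $n$ is an integer, $n\le N_{min}$. So every basic correlator with more than $N_{min}$ insertions vanishes, and Step~2 with induction on $n$ finishes the proof. Instead you declare this bound insufficient and replace it by a \emph{sharpening}. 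Its inequalities contain unspecified terms ($-\text{(gain)}$, $+\dots$), and the final passage from $n-2\le (n-1)P+\dots$ to $n\le 2+\frac{1+\hat c}{1-P}$ is asserted rather than derived. As written, Step~3 does not prove the bound.

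The mechanism behind your sharpening is also false. If only one insertion of degree up to $\hat c$ were essential, you would get $\hat c+n-3\le (n-1)P+\hat c$, i.e.\ $n\le 2+\frac{1+P}{1-P}$, which is not $N_{min}$. The paper's Case~3 shows that two large insertions are really needed. There $P=\frac{1}{2k}$ and $N_{min}=4$, and the input correlator $\langle\phi_1,\phi_1,\phi_{2k-2},\phi_{2k-2}\rangle_{0,4}=-\frac{1}{2k}$ has two insertions of degree exactly $\hat c=\frac{k-1}{k}$. Your refined inequality at $n=4$ reads $1\le\frac{3}{2k}$, which is false for $k\ge 2$. It would therefore make this correlator determined by the pairing and the three-point data. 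It is not, because the purely cubic potential with the same pairing and three-point correlators also solves WDVV.

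A smaller point on Step~2: WDVV for $(\alpha,\beta,\gamma_1,\gamma_2)$ yields three same-length terms, not one. They include $\langle\alpha\circ\beta,\gamma_1,\gamma_2,\dots\rangle$, where $\alpha\circ\beta$ can be a new non-primitive insertion of larger degree, so your induction measure is not obviously decreasing. A measure that works is the total degree of all insertions other than two designated ones. Designate $\alpha,\beta$ and factor a non-designated $\gamma=\gamma_1\circ\gamma_2$. In the resulting terms
\[
\langle\alpha,\gamma_1,\beta\circ\gamma_2,\dots\rangle,\quad
\langle\alpha\circ\gamma_1,\beta,\gamma_2,\dots\rangle,\quad
\langle\alpha\circ\beta,\gamma_1,\gamma_2,\dots\rangle
\]
designate $(\alpha,\beta\circ\gamma_2)$, $(\alpha\circ\gamma_1,\beta)$ and $(\alpha\circ\beta,\gamma_2)$ respectively. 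The measure then drops by $\deg\gamma_2$, $\deg\gamma_1$ and $\deg\gamma_2$, all positive.
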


The proof of this lemma is based on the analysis of WDVV equation.

Concerning ADE Saito theory Dubrovin--Frobenius manifolds Fan, Jarvis and Ruan proved the following important result using the reconstruction lemma above.

\begin{lemma}[Lemma~6.2.9 in \cite{FJR13}]
 All the correlators $\langle \dots \rangle_{0,n}^\F$ for the ADE singularities, in either the A-model or the B-model, are uniquely determined by the pairing, the three-point correlators, and the four-point correlators.
\end{lemma}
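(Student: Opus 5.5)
This is Lemma~6.2.9 of \cite{FJR13}. I would derive it from Lemma~\ref{lemma: reconstruction in genus zero} by bounding $N_{min}$ case by case for the ADE list.

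\emph{Step 1: the degree hypothesis.} I would check that $\deg(\alpha)\le \hat c$ for all $\alpha\in V$, in both models.
\begin{itemize}
\item On the B-side, $V=\Jac(f)$ with the grading given by the weights $q_i$. The top degree is that of the Hessian class, which equals $\sum_i(1-2q_i)=\hat c$.
\item On the A-side, $V$ is the FJRW state space. Its degrees are symmetric about $\hat c/2$ under the pairing, so they range over $[0,\hat c]$.
\end{itemize}
For ADE one has $\hat c<1$. Hence the hypothesis of Lemma~\ref{lemma: reconstruction in genus zero} holds.

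\emph{Step 2: the bound $N_{min}\le 4$.} It suffices to show
\[
2+\frac{1+\hat c}{1-P}<5, \qquad\text{i.e.}\qquad \frac{1+\hat c}{1-P}<3.
\]
\begin{itemize}
\item \emph{B-side.} The primitive elements of $\Jac(f)$ can be taken to be the coordinate classes $z_i$, so $P=\max_i q_i$. I would go through the normal forms, including any quadratic stabilising variables if present, and check the inequality:
\begin{itemize}
\item $A_k$: $z^{k+1}$ gives $q=\tfrac{1}{k+1}$ and $\hat c=\tfrac{k-1}{k+1}$. The ratio is $\tfrac{2k/(k+1)}{k/(k+1)}=2<3$.
\item $D_{k}$, $E_6$, $E_7$, $E_8$: compute $q_i$ and $\hat c=1-\tfrac{2}{h}$ in each case and check that the ratio is below $3$.
\end{itemize}
\item \emph{A-side.} The degrees of the FJRW generators are computed from the $\Theta$-values of the sectors. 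Under the known mirror isomorphisms, the largest primitive degree agrees with the B-side value. So the same numerical check applies, though I would still verify the narrow-sector degrees directly.
\end{itemize}

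\emph{Step 3: conclusion.} By Lemma~\ref{lemma: reconstruction in genus zero}, all correlators are determined by $\eta$ and the correlators with $n\le 4$ points.

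\emph{Main obstacle.} The delicate point is the A-side determination of $P$, especially for $D$-type with $\langle J\rangle$ or the maximal group. There broad sectors or non-obvious generators may have large degree. I would handle this by listing the generators of the FJRW ring explicitly for each ADE case, for instance using Krawitz's isomorphism, and reading off their degrees. If some ratio reached $3$, I would refine the argument by noting that the degree-counting ("selection rule") already forces the 5-point correlators to vanish or to reduce via WDVV.
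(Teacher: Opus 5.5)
Your framework is the one Fan--Jarvis--Ruan use: verify the hypotheses of Lemma~\ref{lemma: reconstruction in genus zero}, then bound $N_{min}$. However, Step~2 is false for $D_n$ with $n\ge 6$, so the argument does not close.

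For $f=x^{n-1}+xy^2$ one has $\hat c=\frac{n-2}{n-1}$. The primitive class $y$ has degree $\frac{n-2}{2(n-1)}$, which is at least $\deg x=\frac{1}{n-1}$ for $n\ge 4$. So $P=\frac{n-2}{2(n-1)}$ and
\[
\frac{1+\hat c}{1-P}=\frac{2n-3}{n-1}\cdot\frac{2(n-1)}{n}=4-\frac{6}{n},
\]
which is $\ge 3$ as soon as $n\ge 6$. Hence $N_{min}=5$ there. This already happens on the B-side, and by the graded ring isomorphism on the A-side as well. It is exactly the ``$N_{min}=5$'' the paper records in its Case~2, with $\deg\phi_1=\frac1k$ and $\deg\tau=\frac{k-1}{2k}$. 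So the delicate point is not the A-side determination of $P$. The reconstruction lemma alone only reduces you to correlators with at most five insertions.

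The missing step is to dispose of the basic five-point correlators. Suppose $\langle\alpha_1,\alpha_2,\alpha_3,\beta,\gamma\rangle_{0,5}$ with $\alpha_i\in\{x,y\}$ is non-zero. The degree axiom requires $\sum\deg=\hat c+2$. Since $\deg\beta,\deg\gamma\le\hat c$ and $2\deg y+\deg x=1$, this forces $\alpha_1=\alpha_2=\alpha_3=y$, and $\beta=x^i$, $\gamma=x^j$ with $i+j=\frac{3n-2}{2}$.

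For odd $n$ no such correlator exists. For even $n$ you must show $\langle y,y,y,x^i,x^j\rangle_{0,5}=0$. This follows from parity in $y$:
\begin{itemize}
\item On the B-side, the involution $y\mapsto -y$ preserves $f$ and the residue pairing, and preserves the primitive form $dx\wedge dy$ up to sign. Hence the potential is even in $t_y$.
\item On the A-side, take for instance $z_1^{2k}+z_2^2$ with $J$ as in Case~2. The element $(0,\frac12)\in G_W$ acts by $-1$ on the broad class $\tau$ and trivially on the narrow sectors. So $G_W$-invariance kills every correlator with an odd number of $\tau$'s. Equivalently, the line bundle degree $l_2=\frac{\#\tau}{2}-1$ must be an integer.
\end{itemize}

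Only with this refinement does the reduction to four-point correlators hold. Your closing sentence gestures at such a refinement, but it supplies no argument and places the difficulty in the wrong case.
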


\section{LG models}\label{section: LG models}
Let $f\in \CC[z_1,\dots,z_N]$ be a quasihomogeneous polynomial of degree $d$ with integer weights $w_1, \dots, w_N$ such that $\gcd(w_1, \dots, w_N)=1$. Namely,
\[
    f(\lambda^{2 \pi \im w_1}z_1,\dots,\lambda^{2 \pi \im w_N}z_N) = \lambda^d f(z_1,\dots,z_N), \quad \forall \lambda \in \CC^\ast.
\]
For each $1\leq k\leq N$, let $q_k=\tfrac{w_k}{d}$. The central charge of $f$ is defined to be
\[
\hat c:=\sum_{k=1}^N(1-2q_k).
\]
A polynomial is \emph{nondegenerate} if
\begin{itemize}
\item[(i)] the weights $q_k$ are uniquely determined by $f$, and
\item[(ii)] the hypersurface defined by $f$ is non-singular in projective space.
\end{itemize}

$f$ is said to define a quasihomogeneous singularity if it's quasihomogeneous and nondegenerate. Such singularities were investigated in details in \cite{SK1}. The special class of such singularities is given by the so--called invetible singularities. These are given by the polynomials $f$ that can be written as a sum of exactly $N$ monomials none of which is $z_iz_j$.

The class of invertible singularities is distinguished in mirror symmetry because one can propose a mirror pair to it, however these are obviously not the only singularities that are under investigation in mirror symmetry (see \cite{HC10,BI24,R24,BT22}).

The following polynomials define ADE singularities.
\begin{align*}
    & f = z_1^{k+1}, \ A_k; \quad f = z_1^{k-1} + z_1z_2^2, \ D_k;
    \\
    & f = z_1^3 + z_2^4, \ E_6; \quad f = z_1^3 + z_1z_2^3, \ E_7, \quad
    f = z_1^3 + z_2^5, \ E_8.
\end{align*}

The maximal group of diagonal symmetries of $f$ is
\[
    G_f := \left\{ (\lambda_1,\ldots,\lambda_N) \in (\CC^*)^N \ \middle|\
f(\lambda_1z_1,\ldots,\lambda_Nz_N)=f(z_1,\ldots,z_N)
\right\}.
\]
In what follows we will denote its elements $(e^{2\pi \im \alpha_1},\dots,e^{2\pi \im \alpha_N})$ just by $(\alpha_1,\dots,\alpha_N)$. Every quasihomogeneous $f$ has a distinguished element $\jw \in G_f$ given by
\[
    \jw = (q_1,\dots,q_N).
\]
The group generated by $\jw$ will be denoted by $J_f$ or just $J$.

For any $G \subseteq G_f$ the pair $(f,G)$ is called a Landau--Ginzburg model or just LG model. In the LG-LG mirror symmetry conjecture both the A-side and B-side are fixed by some LG models. However, the B--side LG model $(f,G)$ is additionally supposed to satisfy $G \subseteq \SL(N,\CC)$ and the A--side LG model $(\widetilde f, \widetilde G)$ is supposed to satisfy $J_{\widetilde f} \subseteq \widetilde G$. This is a huge distinction showing that the A-side can not be considered with the trivial group $G = \{ \id \}$.

\begin{remark}
In this note we only consider diagonal symmetry groups. However there are many publications working with the nonabelian symmetry groups in mirror symmetry as well (cf. \cite{EG22, PWW20, BI21, B25}).
\end{remark}

For the ADE Saito theory the algebra $(V,\circ)$ of Lemma~\ref{lemma: reconstruction in genus zero} coincides with
$\Jac(f) := \mathbb C[z_1,\ldots,z_N] / (\partial_{z_1}f,\ldots,\partial_{z_N})$, local algebra of $f$.
Denote $\mu :=\dim_{\mathbb C}\Jac(f)$, Milnor number of the singularity.

\subsection{Orbifold equivalence}\label{section: orb equivalence}
In this note we are going to use very special orbifold equivalent pairs. For the general notion we refer the reader to \cite{CRCR16,RCN16}. We follow the ideas of \cite{BTW23,BT20}.

Let $f = f(z_1,z_2,z_3)$ and $N=3$ and $G\subseteq G_f \cap \SL(3,\CC)$. Consider the new polynomial $\overline f \in \CC[y_1,y_2,y_3]$ defined as follows.

Denote by $\widehat{\CC^3/G}$ a crepant resolution of $\CC^3/G$. The polynomial $f$ gives the map $\widehat f: \widehat{\CC^3/G} \to \CC$. It was observed in \cite{BTW23} that for all $(f,G)$ defining the B--side LG model just one of the crepant resolution charts will contain all the critical points of $\widehat f$. Denote by $\overline f$ the corresponding polynomial representing $\widehat f$ in this chart.

Then we have the following equivalence of the categories of matrix factorizations.
\begin{equation*}
\MF(\overline f)=\MF_{\{\id\}}(\overline f)\cong \MF_G(f).
\end{equation*}

This follows due to the local property of the category of singularities
of $\{\widehat{f}=0\}\subset \widehat{\CC^3/G}$ which is equivalent to $\MF_G(f)$ \cite[Proposition~1.14]{Orl1}.

All pairs $(f,G)$ with $f$ defining ADE singularity in $3$ variables, nontrivial $G \subseteq G_f \cap \SL(3,\CC)$ and the corresponding $\overline f$ are given in Table~\ref{tableADE}. Theorem~64 of \cite{BTW23} also establishes explicit isomorphisms
\[
    \Jac(\overline f) \cong \Jac(f,G) = \ccHH^\ast \left( \MF_G(f) \right).
\]

\subsection{Singularities equivalence}
In classical singularity theory one considers the polynomials $f$ up to the left-right equivalence. Namely, two polynomials $f_1,f_2$ are said to be left-right equivalent (and define the same singularity) if they can be connected by the invertible transformations of their domains. In particular, $D_4$ singularity can be defined either by $f = z_1^3 + z_1z_2^2$ or by $f = z_1^3 + z_2^3$.

Similarly, the polynomials $f(z_1,\dots,z_N)$ and $f^{st} := f(z_1,\dots,z_N) + z_{N+1}^2$ are often assumed to be equivalent. The singularity theory operation connecting two polynomials is called ``stabilization'' rathen than equivalence. In particular, the local algebras of two such polynomials are canonically isomorphic.

This is yet another point that will distinguish A-side and B-side constructions associated to the LG models. In particular, all B-side constructions of this note will be invariant under the stabilization $f \to f^{st}$, while the same property does not in general hold for the A--side.

\section{Variations of semi-infinite Hodge structures: the B side}

Let $M$ be a formal or analytic parameter space and let $\O_M[[u]]$ denote the sheaf of formal power series in a variable $u$.

A VSHS over $M$ consists of the data $(\E, \nabla, K)$: a locally free $\O_M[[u]]$-module $\E$, a meromorphic flat connection $\nabla: \mathcal E \longrightarrow \mathcal E\otimes_{\O_M} \Omega^1_M((u))$
and a pairing $K: \E \otimes_{\O_M} \E \longrightarrow \O_M[[u]]$. Assume also

\begin{enumerate}
    \item $\E$ is of finite rank, equipped with a $\mathbb{Z}/2\mathbb{Z}$-grading.

    \item $\nabla$ is required to have restricted pole structures at $u = 0$.
        \[
            \nabla_{\partial_u}: \E \longrightarrow u^{-2}\E, \quad \nabla_X : \E \longrightarrow u^{-1}\E \quad \forall X \in \T_M.
        \]

    \item $K$ is non-degenerate and $\O_M$-bilinear, compatible with the grading.
    It satisfies the $u$-sesquilinear property $K( u \alpha, \beta) = K(\alpha, -u \beta)$
    for any $\alpha, \beta \in \E$, and is compatible with the connection $\nabla$.
\end{enumerate}

\subsection{Primitive form of a VSHS}

The data of VSHS is more general than the Dubrovin--Frobenius manifold. In order to relate two we need to use primitive forms.

Given a VSHS $(\E,\nabla, K)$, a primitive form is a distinguished section $\zeta\in \E$ such that
\begin{equation*}
\mathcal T_M \longrightarrow \E/u\E, \qquad X \longmapsto u\nabla_X\zeta \pmod{u\E}.
\end{equation*}
is an isomorphism.
This isomorphism is used to lift the connection $\nabla$ to $M \otimes \CC_z^\ast$. If the element $\zeta$ satisfies the certain list of additional properties, this lifted connection plays the role of Dubrovin connection and endows $M$ with the structure of a Dubrovin--Frobenius manifold (see \cite{Sc07, B00, B01}).

\subsection{Saito theory VSHS}

\subsubsection{Unfolding of a singularity}

Let $ f:\CC^N \longrightarrow \CC $ have an isolated critical point at the origin, and let
$ \Jac(f)$ be its local algebra, $\mu:=\dim_{\mathbb C}\Jac(f)$.

An \textit{unfolding} of $f$ is the function $F : Z \to \CC$, where $Z = \CC^N \times \calS$, for some open neighbourhood of the origin $\calS \subset \CC^\mu$ with coordinates $s_\bullet$, defined by
\[
    F(\bz,\bs) = f(\bz) + \sum_{k = 1}^\mu \phi_k(\bz) s_k.
\]
Here the polynomials $\phi_\bullet \in \CC[\bz]$ are taken to be such that their classes generate $\Jac(f)$.

Denote by $p: Z \to \calS$ the projection on the second factor.
Consider the critical sheaf
\[
    \O_\C := \O_{Z} / \left( \p_{z_1}F, \dots, \p_{z_N} F \right).
\]
Then $p_\ast \O_\C$ is an $\O_\S$--module of rank $\mu$ with the essential product structure. The product of the Saito theory Dubrovin--Frobenius manifold will be inherited exactly from $p_\ast \O_\C$.

Let $(\Omega^\bullet_{Z/\calS},d_{Z/\calS})$ stand for the de Rham complex relative to $p$. Consider the direct image sheaf
\[
    \RR^k p_\ast \left( \Omega^\bullet_{Z/\calS} [u],u d_{Z/\calS} + dF \wedge \right).
\]
It vanishes for $k \neq N$. And when $k=N$, it is isomorphic to the sheaf
\[
    \H_F^{(0)} := \Omega^N_{Z/\calS} [u] / (u d_{Z/\calS} + dF \wedge ) \Omega^{N-1}_{Z/\calS} [u].
\]
This sheaf is called {\it Brieskorn lattice of $F$}. It is a locally free $\O_\calS[u]$--module of rank $\mu$ (cf. Proposition 3.5 of \cite{IMRS}).

\subsubsection{Gauss--Manin connection and higher residue pairing}\label{section: Gauss-Manin connection}
Consider the completion $\H_F := \H_F^{(0)} \otimes \CC(u)$.
It is endowed with the Gauss--Manin connection $\nabla^\GM$ defined as follows. Let $v \in \T_\calS$, $\phi \in \O_\calS[u]$, and $d^N\bz = dz_1\wedge \dots \wedge dz_N$. Denote by $v(\phi)$ the directional derivative along $v$ and by $[\phi d^N\bz]$ the class of $\phi d^N\bz$ in $\H_F$.
\begin{align*}
    & \nabla_v^\GM [\phi d^N\bz] := \left[ (v (\phi) + u^{-1} \phi \cdot v(F)) d^N\bz \right],
    \\
    & \nabla_{\frac{\p}{\p u}}^\GM [\phi d^N\bz] := \left[ \left(\frac{\p \phi}{\p u} - u^{-2} \phi \cdot F - \frac{N}{2} u^{-1} \phi \right) d^N\bz \right].
\end{align*}
The Gauss-Manin connection satisfies the Leibniz rule and is flat.

K. Saito introduced the pairing (cf. \cite{SK2})
\[
   K_F: \H_F^{(0)}\otimes_{\O_\S} \H_F^{(0)}\to \O_\S[u]
\]
called the \textit{higher residue pairing}, uniquely defined by the following properties. Firs of all $K_F$ is flat with respect to the Gauss--Manin connection.
Let
\[
 K_F(\omega_1, \omega_2) = \sum_{p \ge 0} u^p K_F^{(p)}(\omega_1,\omega_2)
\]
then we have $K_F^{(p)}(\omega_1, \omega_2) = (-1)^p K_F^{(p)}(\omega_2,\omega_1)$, $K_F(u\omega_1, \omega_2) = -K_F(\omega_1, u \omega_2) = uK_F(\omega_1,\omega_2)$. The leading term
$K_F^{(0)}$ defines the pairing
$$
    \H_F^{(0)}/u  \H_F^{(0)}\otimes  \H_F^{(0)}/u  \H_F^{(0)} \to \CC, \quad \omega_1\otimes \omega_2\mapsto K_F^{(0)}(\omega_1,\omega_2)
$$
which coincides with the residue pairing $\eta$.

This is a theorem of K.~Saito that these properties fix the higher residue pairing completely.
The higher residue pairing $K_F$ extends to the completion $\H_F$.

\subsubsection{The VSHS}

K. Saito developed in \cite{SK3} the theory of primitive forms $\zeta \in \H_F^{(0)}$ as the special elements useful to construct the period mapping. Existence of a primitive form was proven by M. Saito in \cite{SM1,SM2}. It was later observed that having fixed the primitive form, the space $\S$ can be endowed with the Dubrovin-Frobenius manifold structure (see \cite{ST}). We can formulate their theorem as follows.

\begin{theorem}[\cite{ST}]\label{theorem: classical saito}
    For any $f$ defining an isolated singularity the data $(\H_F, \nabla^{\GM},K_F)$ is a VSHS on the unfolding space $\S$. After the choice of a primitive form of Saito this VSHS produces the structure of a Dubrovin--Frobenius manifold on $\S$.
\end{theorem}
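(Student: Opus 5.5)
The statement to justify is Theorem~\ref{theorem: classical saito}: for $f$ with an isolated critical point, $(\H_F,\nabla^{\GM},K_F)$ is a VSHS on $\S$, and a choice of primitive form produces a Dubrovin--Frobenius manifold on $\S$. I would split this into two steps: first check the VSHS axioms directly, then pass from a primitive form to flat coordinates and a potential.

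For the VSHS axioms I would argue as follows.
\begin{enumerate}
\item Local freeness of finite rank $\mu$ is the cited fact that $\H_F^{(0)}$ is a locally free $\O_\S[u]$--module of rank $\mu$. Completing in $u$ gives the required $\O_\S[[u]]$--module. The $\mathbb Z/2\mathbb Z$--grading is the parity of $N$, concentrated in one degree.
\item For the pole orders, read off the formulas for $\nabla^{\GM}$. The term $u^{-1}\phi\, v(F)$ shows $\nabla_v$ maps $\H_F^{(0)}$ into $u^{-1}\H_F^{(0)}$. The term $u^{-2}\phi F$ shows $\nabla_{\p_u}$ maps into $u^{-2}\H_F^{(0)}$.
\item Well-definedness amounts to checking that both operators preserve the submodule $(u d_{Z/\S}+dF\wedge)\Omega^{N-1}_{Z/\S}[u]$. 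This holds because each operator is a conjugate of ordinary differentiation: $\nabla_v = e^{-F/u}\circ v\circ e^{F/u}$, and similarly for $\p_u$ with the extra $-\tfrac N2u^{-1}$ shift. These conjugates commute with $e^{-F/u}\, u d\, e^{F/u} = u d + dF\wedge$.
\item Flatness follows from the same conjugation, since ordinary partial derivatives commute.
\item The properties of $K_F$ — sesquilinearity, the $(-1)^p$ symmetry, $\nabla^{\GM}$--flatness, and non-degeneracy through $K_F^{(0)}=\eta$ — are exactly Saito's characterization of the higher residue pairing, which I take as given. Non-degeneracy of $\eta$ is Grothendieck local duality on $\Jac(f)$ fibrewise.
\end{enumerate}

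For the Dubrovin--Frobenius structure I would first invoke M.~Saito's existence theorem for a primitive form $\zeta\in\H_F^{(0)}$. I would then use the Kodaira--Spencer isomorphism $\T_\S\to p_\ast\O_\C$, $v\mapsto [v(F)]$, composed with $p_\ast\O_\C\cong \H_F^{(0)}/u\H_F^{(0)}$, $\phi\mapsto[\phi\,\zeta]$. This is the map $X\mapsto u\nabla_X\zeta \bmod u$, and it is an isomorphism because $\zeta$ is primitive. The product on $\T_\S$ is pulled back from $p_\ast\O_\C$, and the metric is $\eta(X,Y)=K_F^{(0)}(u\nabla_X\zeta,u\nabla_Y\zeta)$.

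The primitive form axioms do the remaining work.
\begin{itemize}
\item Orthogonality $K_F(u\nabla_X\zeta,u\nabla_Y\zeta)\in\O_\S$ ensures no higher powers of $u$ appear.
\item The conditions $u\nabla_X u\nabla_Y\zeta\in \H^{(0)}+u^{-1}\cdot$span and the homogeneity condition for $\nabla_{\p_u}$ give a torsion-free connection $\ns$ flat for $\eta$, with $\ns C=0$ and $\ns e=0$.
\item The Euler field comes from the $\nabla_{\p_u}$ eigen-condition, which gives the quasihomogeneity of $\eta$ and $\circ$.
\end{itemize}
From here the standard Barannikov/Saito--Takahashi argument produces flat coordinates $t_\alpha$ and a potential $\F$ satisfying WDVV~\eqref{eq:wdvv}.

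I expect the main obstacle to be this last step: proving that the metric induced by $K_F^{(0)}$ is flat and $C$ is $\ns$--closed. Here the higher-order terms $K_F^{(p)}$, $p\ge1$, and the $u$--filtration must interact correctly. I would attack it by working in a good opposite filtration/splitting of $\H_F$ compatible with $K_F$, in the spirit of \cite{Sc07}, which makes the flat structure manifest.
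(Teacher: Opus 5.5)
Your outline matches the paper's approach: the paper gives no argument of its own and simply attributes the statement to Saito--Takahashi \cite{ST}, and you likewise verify the routine VSHS axioms by hand (rank, pole orders, flatness via conjugation by $e^{F/u}$, Saito's characterization of $K_F$) while deferring the substantive step --- flatness of $\eta$, $\ns C=0$ and WDVV from Saito's primitive form axioms --- to the Saito--Takahashi/Barannikov theory. One small correction in your well-definedness step: $e^{-F/u}\p_u e^{F/u}=\p_u-u^{-2}F$ does not literally commute with $u\,d_{Z/\S}+dF\wedge$, because $[\p_u,u\,d]=d$ makes the commutator equal to $u^{-1}(u\,d_{Z/\S}+dF\wedge)$; this still maps the submodule into $u^{-2}$ times itself, so $\nabla_{\p_u}$ descends to the quotient as you claim.
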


\subsection{Categorical Saito theory}\label{section: categorical saito theory}
Let $\C_{f,G}$ stand for the $A_\infty$--category $\MF_G(f)$.

Fix a basis $\psi_1,\dots, \psi_\delta$ of $\ccHH^\ast(\MF_G(f))$ and the formal parameters $t_1,\dots,t_\delta$ corresponding to these basis elements.
Let $\C_{f,G,t}$ be the family of $A_\infty$--categories given by the formal universal deformation $\beta_{f,G}$. This is a solution to the Maurer--Cartan equation on $\C_{f,G}$, such that $\beta_{f,G} = \beta_1 + \beta_2 + \dots$ with $\beta_1 = \sum_{i=1}^\delta t_i \psi_i$.

\begin{remark}
    This is important to stress that Saito theory is associated to the unfolding $F$ that may be viewed as a deformation of the polynomial $f$ by the set of polynomials $\phi_1,\dots,\phi_\mu$. The context of categorical Saito theory is more general. In particular, the elements $\psi_1,\dots, \psi_\delta$ above are not just polynomials, but the cohomology elements already, that might even have different grading.
\end{remark}

Associate to the pair $(f,G)$ the data
\begin{equation*}
\left( \HC^-_\ast(\C_{f,G,t}),
\nabla^{f,G},
\langle-,-\rangle_{\mathrm{Muk}}
\right).
\end{equation*}

Here $\HC^-_\ast(\C_{f,G,t})$ is negative cyclic cohomology of $\C_{f,G,t}$. This negative cyclic cohomology is endowed with the Getzler-Gauss-Manin connection $\nabla^{f,G}$ and Mukai pairing $\langle-,-\rangle_{\mathrm{Muk}}$. We do not give the definitions becasue these are lengthy and refer the reader to \cite{Sn20}.

\begin{theorem}[\cite{T21b, T21a, AT22}]\label{theorem: categorical saito equivariant}
    The data $\left(\HC^-_\ast(\C_{f,G,t}), \nabla^{f,G}, \langle-,-\rangle_{\mathrm{Muk}}\right)$ is a VSHS for any LG model $(f,G)$.
    If $f$ is invertible, this VSHS can be endowed with a primitive element $\zeta$, that can be used to construct the Dubrovin--Frobenius manifold.
\end{theorem}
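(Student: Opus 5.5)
The statement just given is Theorem~\ref{theorem: categorical saito equivariant}, cited from \cite{T21b, T21a, AT22}. The plan is to reduce the equivariant categorical statement to the known non-equivariant facts in two stages.

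\emph{Stage one: the VSHS structure.} First we identify $\HC^-_\ast(\C_{f,G,t})$ with a computable model. Using the orbifold HKR-type description of Hochschild homology of $\MF_G(f)$,
\[
\ccHH_\ast(\MF_G(f)) \cong \Big(\bigoplus_{g\in G}\Jac(f^g)\,d^{N_g}z_g\Big)^G ,
\]
we show that the Hodge-to-de Rham spectral sequence degenerates for the formal deformation $\C_{f,G,t}$. On each twisted sector the deformed complex is a twisted de Rham complex $(\Omega^\bullet[[u]], u\,d + dF_g\wedge)$ of the restricted unfolding, whose cohomology is free by the Brieskorn lattice result quoted before Theorem~\ref{theorem: classical saito}. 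Taking $G$-invariants preserves freeness, since $G$ is finite and we work in characteristic zero. This gives that $\HC^-_\ast$ is a locally free $\O[[u]]$-module of rank $\delta$. Next, the Getzler--Gauss--Manin connection has the required pole orders by its explicit formula: it is built from $u^{-1}$ times the action of the Maurer--Cartan derivative through the Cartan calculus. The $\partial_u$-direction is defined via the grading operator, which exists because $f$ is quasihomogeneous. Finally, the Mukai pairing is nondegenerate because $\MF_G(f)$ is smooth and proper (a Calabi--Yau-type category for isolated singularities), it is $u$-sesquilinear, and it is flat for the Getzler--Gauss--Manin connection by Shklyarov's theorem \cite{Sn20}.

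\emph{Stage two: the primitive element.} For invertible $f$ we construct $\zeta$ by the Saito--Barannikov procedure. We choose an opposite filtration $L\subset \HC^-_\ast\otimes\CC((u))$ that is $u^{-1}$-stable, isotropic for the Mukai pairing and compatible with the grading. We then solve for $\zeta$ perturbatively in $t$, so that $\zeta$ is homogeneous and the Kodaira--Spencer map $X\mapsto u\nabla_X\zeta \bmod u$ is an isomorphism. Since $\beta_1=\sum t_i\psi_i$ spans $\ccHH^\ast$, this map is the identity to first order.

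\emph{Main obstacle.} The hardest step is the existence of a good, grading-compatible opposite filtration in the equivariant setting. Invertibility of $f$ is used here: it makes the grading on each sector $\Jac(f^g)$ explicit via Krawitz's basis \cite{K10}. The plan is to take $L$ spanned by $u^{-1}$ times a homogeneous basis, and then check isotropy sector by sector. The Mukai pairing pairs sector $g$ with $g^{-1}$, and the exponents are symmetric about $\hat c/2$. Once isotropy holds, the remaining construction of the Dubrovin--Frobenius manifold is the standard one from \cite{Sc07, B00, B01}.
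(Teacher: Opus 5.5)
The paper does not prove this statement. It imports it from Tu's categorical Saito theory and Amorim--Tu, so your outline has to stand on its own. Its overall shape is the right one: Hodge-to-de Rham degeneration, the Getzler--Gauss--Manin connection and Mukai pairing, then a good splitting. But the step producing the primitive element has a genuine gap.

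The Kodaira--Spencer map at $t=0$ is not ``the identity to first order''. Its source is $\ccHH^\ast(\MF_G(f))$, spanned by the $\psi_i$, and its target is $\E/u\E=\ccHH_\ast(\MF_G(f))$. It is the contraction $\psi\mapsto\psi\cap\zeta_0$, where $\zeta_0$ is $\zeta|_{t=0}$ modulo $u$. It is bijective only if $\zeta_0$ is a nondegenerate (Calabi--Yau) class, and nothing in your argument produces one. Smoothness and properness give nondegeneracy of the Mukai pairing, not this.

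What is needed is the paper's standing B-side hypothesis $G\subseteq\SL(N,\CC)$, which you never use. It makes $[d^Nz]$ a $G$-invariant class in the untwisted sector. Contraction with it then identifies $\ccHH^\ast$ with $\ccHH_\ast$ sector by sector, because when $\det|_G\equiv1$ the characters of the top polyvectors normal to $\Fix(g)$ and of the top forms on $\Fix(g)$ agree. Without this hypothesis the conclusion is false: for $(x^d,\mu_d)$ one has $\dim\ccHH^\ast=1$ but $\dim\ccHH_\ast=d-1$. The same example shows that your claim that $\HC^-_\ast$ has rank $\delta$ silently assumes the Calabi--Yau case. The correct procedure is to take $\zeta_0$ to be this homogeneous class and let $L$ determine $\zeta$ (Saito--Barannikov). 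Primitivity is then an open condition, checked at $t=0$.

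Two further steps fail as written.

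\emph{Stage one.} $\C_{f,G,t}$ is deformed along all of $\ccHH^\ast(\MF_G(f))$, including twisted-sector classes. These directions are not $G$-invariant unfoldings of $f$: in case~1 of Table~\ref{tableADE}, $k$ of the $2k$ directions come from the sector of $\frac12(0,1,1)$. So there is no ``restricted unfolding $F_g$''. Moreover, capping with a $g$-sector class sends the $h$-sector to the $gh$-sector, so the deformation mixes sectors. The twisted de Rham computation is valid only at $t=0$. You must prove degeneration there and then propagate it to the formal family. For instance, flatness of the Getzler--Gauss--Manin connection makes periodic cyclic homology of the family free of rank $\dim\ccHH_\ast(\MF_G(f))$. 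Semicontinuity then forces $\ccHH_\ast(\C_{f,G,t})$, and hence $\HC^-_\ast(\C_{f,G,t})$, to be free.

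\emph{Isotropy in your main obstacle.} Isotropy of $L=u^{-1}\CC[u^{-1}]\cdot\mathrm{span}\{\phi_i\}$ means that every higher pairing $K^{(p)}(\phi_i,\phi_j)$ with $p\ge1$ vanishes. Pairing $g$ with $g^{-1}$ and the symmetry of the exponents do not give this. Homogeneity only shows that $K^{(p)}(\phi_i,\phi_j)\neq0$ forces $\deg\phi_i+\deg\phi_j=\hat c+p$. That is impossible when $\hat c<1$, which holds for every pair in Table~\ref{tableADE}. For general invertible $f$, however, you need a real existence argument, as in M.~Saito's theorem or its categorical analogue. Finally, $L$ must be fixed at $t=0$ and extended $\nabla$-flatly, not spanned by a fibrewise homogeneous basis.
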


We will denote this Dubrovin--Frobenius manifold by $M_{f,G}^\zeta$.
The theorem above also holds for $G=\{\id\}$.

\begin{theorem}[\cite{Sd14, Sd16, Sd20, ST}]\label{theorem: categorical saito classical}
    For any $f$ defining the quasihomogeneous singularity, the data $\left(\HC^-_\ast(\C_{f,\{\id\},t}), \nabla^{f,\{\id\}}, \langle-,-\rangle_{\mathrm{Muk}}\right)$ is a VSHS.
    This VSHS is isomorphic to the VSHS of the classical Saito theory of $f$.
\end{theorem}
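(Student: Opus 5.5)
The statement in question is Theorem~\ref{theorem: categorical saito classical}: for $G=\{\id\}$ the categorical VSHS $\left(\HC^-_\ast(\C_{f,\{\id\},t}),\nabla^{f,\{\id\}},\langle-,-\rangle_{\mathrm{Muk}}\right)$ is a VSHS, and it is isomorphic to the Saito VSHS $(\H_F,\nabla^\GM,K_F)$ of Theorem~\ref{theorem: classical saito}. I would argue in three steps.

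\emph{Step 1: matching the deformation parameters.} For $G=\{\id\}$ we have $\ccHH^\ast(\MF(f))\cong\Jac(f)$, concentrated in even degree. Hence the universal formal deformation $\beta_f$ may be taken to be curvature deformation only, $f\mapsto F(\bz,\bs)=f+\sum_k s_k\phi_k$ with $t_k=s_k$. This gives an isomorphism of the formal base $\widehat{\S}$ with the formal germ of the unfolding space, and an identification $\C_{f,\{\id\},t}\simeq \MF(F)$ over $\O_{\widehat\S}$. One has to check that the Maurer--Cartan element can be chosen polynomial in the $\phi_k$; this holds because the Kodaira--Spencer map for the family of curvatures $F$ is an isomorphism onto $\ccHH^{even}$.

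\emph{Step 2: Hochschild invariants of $\MF(F)$ as a twisted de Rham complex.} I would use the Hochschild--Kostant--Rosenberg type isomorphism of Dyckerhoff and Segal. It identifies $\ccHH_\ast(\MF(F))$ with $\Omega^N_{Z/\S}/dF\wedge\Omega^{N-1}_{Z/\S}$. Its $u$-linear extension identifies $\HC^-_\ast(\MF(F))$ with the Brieskorn lattice $\H_F^{(0)}$, with the differential $b+uB$ mapping to $dF\wedge+u\,d_{Z/\S}$.

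The key check is that this quasi-isomorphism intertwines the mixed complex structures. Connes' operator $B$ has to correspond to the de Rham differential, and not merely up to homotopy in a $u$-dependent way. I would verify this with Segal's explicit Chern character / boundary-bulk maps. Alternatively I would invoke Shklyarov's result that the canonical map $\mathrm{HN}_\ast(\MF(F))\to\H_F^{(0)}$ is an isomorphism of $\O_\S[[u]]$-modules.

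\emph{Step 3: connection and pairing.} On the categorical side, Getzler's formula gives $\nabla_v=\partial_v+u^{-1}\iota_{\partial_v\beta}$-type operators plus a $u$-direction Euler term. Under Step~2 the first of these becomes $\phi\mapsto v(\phi)+u^{-1}\phi\,v(F)$, which is exactly $\nabla^\GM_v$ of Section~\ref{section: Gauss-Manin connection}. The $\partial_u$ component matches using the grading operator on $\Omega^\bullet$, which produces the shift $-\frac N2 u^{-1}$.

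For the pairing I would show that the Mukai pairing transported to $\H_F^{(0)}$ is flat for $\nabla^\GM$, is $u$-sesquilinear, and has leading term equal to the residue pairing. The residue statement is the Kapustin--Li / Polishchuk--Vaintrob formula. Saito's uniqueness theorem for $K_F$ then gives $\langle-,-\rangle_{\mathrm{Muk}}=K_F$, up to a universal normalizing constant and sign that can be fixed by comparing leading terms.

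Once the three structures are matched, the VSHS axioms transfer from Theorem~\ref{theorem: classical saito}. This proves both assertions.

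I expect the main obstacle to be Step~2 in families: making the HKR identification compatible with $B$ versus $d$ and with the Getzler connection over a curved, $\bs$-dependent base. This is precisely where one needs Shklyarov's analysis rather than a formal argument.
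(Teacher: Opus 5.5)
The paper gives no argument of its own here; the statement is quoted from the cited works. Your outline follows the route those works take: identify negative cyclic homology with the Brieskorn lattice via HKR, match the Getzler--Gauss--Manin connection with $\nabla^{\GM}$, and use Shklyarov's identification of the canonical pairing with Saito's higher residue pairing. However, one step fails as written: the identification $\C_{f,\{\id\},t}\simeq\MF(F)$ in Step~1, on which Step~2 rests.

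The problem is the constant direction. Since $1\in\Jac(f)$, $\phi_1=1$ is one of the deformation directions. An object $(P,d)$ of $\MF(f)$ lifts to first order to a matrix factorization of $f+s_1$ only if there is an odd $d_1$ with $dd_1+d_1d=\id_P$, i.e.\ only if $P\simeq 0$. Globally, $\partial_{s_1}F=1$ makes the hypersurface $\CC[\bz][[\bs]]/(F)$ regular. So the honest dg category $\MF(F)$ over $\O_{\widehat\S}$ is homotopically trivial and $\ccHH_\ast(\MF(F))=0$, while $\Omega^N_{Z/\S}/dF\wedge\Omega^{N-1}_{Z/\S}$ is free of rank $\mu$. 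Hence the Dyckerhoff--Segal isomorphism you invoke cannot hold for this $\MF(F)$. Your Kodaira--Spencer argument is fine, but it produces a curved deformation: $\C_{f,\{\id\},t}$ has the objects of $\MF(f)$ with curvature $(F-f)\cdot\id$ added. It is not the dg category of matrix factorizations of $F$.

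The repair is to compute $\HC^-_\ast(\C_{f,\{\id\},t})$ through the curved algebra $(\CC[\bz][[\bs]],F)$ and its $\bs$-adically completed Hochschild mixed complex of the second kind. This uses the comparison of Hochschild invariants of $\MF(f)$ and of $(\CC[\bz],f)$ (Polishchuk--Positselski, C\u{a}ld\u{a}raru--Tu, Segal), together with its compatibility with curved deformations. In that model the map $a_0\otimes a_1\otimes\cdots\otimes a_n\mapsto\frac1{n!}a_0\,da_1\wedge\cdots\wedge da_n$ is a strict morphism of mixed complexes. It kills $b$, sends the curvature-insertion term to $dF\wedge$ up to sign, and sends $B$ to $d_{Z/\S}$. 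So the obstacle you anticipate in Step~2 disappears, and the real input is the comparison between the curved deformation of $\MF(f)$ and the curved algebra.

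A smaller point in Step~3: the Kapustin--Li/Polishchuk--Vaintrob formula gives the leading term only at $\bs=0$, not over the curved family. It is cleaner to match the pairings at $\bs=0$ using Shklyarov's theorem (the canonical pairing is a constant multiple of $K_f$). You can then propagate over $\CC[[\bs]]$, since a $\nabla$-flat pairing over a formal base is determined recursively by its restriction to $\bs=0$. If you instead invoke Saito's uniqueness, you must also verify the symmetry $K^{(p)}(\omega_1,\omega_2)=(-1)^pK^{(p)}(\omega_2,\omega_1)$.
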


\subsection{Proof of Theorem~\ref{theorem: explicit}, part (i)}\label{section: proof of part i}

In order to prove part (i) of Theorem~\ref{theorem: explicit} it's enough  to establish the isomorphism of two VSHS. The first one of $\overline f$ given in Theorem~\ref{theorem: classical saito} and the second of $(f,G)$ given in Theorem~\ref{theorem: categorical saito equivariant}.

By using Theorem~\ref{theorem: categorical saito classical} we may consider both VSHS given in the categorical construction of Section~\ref{section: categorical saito theory}. This allows us to use the methods of homological algebra.

By the construction the pair $(f,G)$ is orbifold equivalent to $\overline f$ and there is equivalence of categories (see Section~\ref{section: orb equivalence})
\begin{equation*}
\Phi: \MF_G(f) \longrightarrow \MF(\bar f).
\end{equation*}

Morita invariance of negative cyclic homology gives an isomorphism
\begin{equation*}
\Phi_{\mathrm{HC}^-}: \HC^-_\ast(\MF_G(f)) \xrightarrow{\sim} \HC^-_\ast(\MF(\overline f)).
\end{equation*}
We may assume that this isomorphism agrees with the fixed isomorphism $\Jac(f,G) \cong \Jac(\overline f)$ of \cite{BTW23}.
This extends $\Phi_{\mathrm{HC}^-}$ to the isomorphism of the formal parameter spaces of two VSHS.

The isomorphism $\Phi$ is compatible with the Getzler--Gauss--Manin connection (cf. Theorem~A and Theorem~B.2 of \cite{Sn20}) and the pairing
\begin{equation*}
\Phi_{\mathrm{HC}^-} \circ \nabla^{f,G} = \nabla^{\bar f,\{\id\}} \circ \Phi_{\mathrm{HC}^-},
\quad
\left\langle \Phi_{\mathrm{HC}^-}(a), \Phi_{\mathrm{HC}^-}(b) \right\rangle_{\mathrm{Muk}} = \langle a,b\rangle_{\mathrm{Muk}}.
\end{equation*}

Thus the categorical VSHS is a Morita invariant --- the VHSH defined by $(f,G)$ and $(
\overline f, \{ \id \})$ are isomorphic by $\Phi_{\mathrm{HC}^-}$.

Therefore the choice of a primitive form for both can be identified by $\Phi_{\mathrm{HC}^-}$ giving the isomorphic Dubrovin--Frobenius manifolds.
This completes proof of part (i).

\section{FJRW theory: the A side}\label{section:definitionOfFJRW}

The Landau--Ginzburg A model is provided by FJRW theory. Its input is a pair $(W,G)$ of a quasihomogeneous polynomial and a group $G \subseteq G_{W}$ satisfying $J_W \subseteq G$.

\subsection{State Space}
Originally  FJRW theory defines a state space and a moduli space of $W$-curves, from which one obtains certains numbers---called correlators---as integrals over the moduli space (see \cite{FJR07, FJR13}).

We do not give all details in this note introducing only the CohFT of FJRW theory.

For $h\in G$, let $\cFix(h)$ denote the fixed locus of $\CC^N$ with respect to $h$, let $N_h$ denote the dimension of $\cFix(h)$ and let $W_h$ denote $W|_{\cFix(h)}$.

For any $h \in G$ let $\Omega_{\cFix(h)}^{m}$ stand for the sheaf of holomorphic $m$--forms of $\cFix(h)$.
Define
\begin{equation*}
\sH_{h}' := \Omega^{N_h}_{\cFix(h)} / dW_h \wedge \Omega^{N_h-1}_{\cFix(h)}.
\end{equation*}
This is a general fact from singularity theory that $\sH_{h}'$ has dimension equal to the Milnor number of $W_h$. It is also endowed with the pairing --- residue pairing.
Let the group $G$ act on the sheaf of holomorphic $m$--forms coordinate-wise.
Define
\begin{equation*}
\sH_{h} := \left( \sH_{h}' \right)^G
\end{equation*}
that is, $G$-invariant elements of $\sH_{h}'$. The state space is the direct sum of the ``sectors'' $\sH_h$, i.e.
\[
\sH_{W,G} := \bigoplus_{h\in G}\sH_h.
\]

Let $G^{nar}=\set{h\in G\mid N_h=0}$. These summands $\sH_h$ for $h\in G^{nar}$ are the so-called \emph{narrow} sectors. The summands with the positive--dimensional fixed locus are called {\it broad}.

$\sH_{W,G}$ is $\mathbb{Q}$-graded by the $W$-degree. To define this grading, first note that each element $h\in G$ can be uniquely expressed as a tuple of rational numbers
$h=(\Theta_1^h,\dots,\varTheta_N^h)$
with $0\leq \varTheta_k^h < 1$.
We first define the \emph{degree-shifting number}
\[
\iota(h):=\sum_{k=1}^N(\varTheta_k^h-q_k),
\]
where $q_1,\dots,q_N$ are the rational numbers, giving the quasihomogeneity weights of $W$ (see Section~\ref{section: LG models}).
For $\alpha_h \in \H_{h}$, the (real) $W$-degree of $\alpha_h$ is defined by
\begin{equation*}
\deg_W(\alpha_h):=N_h+2\iota(h).
\end{equation*}

The sector indexed by $\jw$, is one--dimensional, and has $W$--degree 0.
This sector is unique with this property.

Because $\cFix(h)= \cFix(h^{-1})$ there is a nondegenerate pairing
\[
\br{-,-}:\sH_h\times\sH_{h^{-1}}\to \CC,
\]
the residue pairing of $W_h$, which induces a symmetric nondegenerate pairing
\[
\br{- ,- }:\sH_{W,G}\times \sH_{W,G}\to \CC.
\]

\subsection{Axioms of FJRW theory}\label{ss:FJRWaxioms}
FJRW theory associates to the pair $(W,G)$ the CohFT $c_{g,n}^{W,G}$ on the moduli space of $n$--pointed genus $g$ stable curves $\overline \M_{g,n}$ (see Theorem~4.2.2 of \cite{FJR13}).
We do not give the construction of this CohFT but list some properties that it satisfies. We follows the common convention to call these properties axioms (see Section~4 of \cite{FJR13}).

Assumed $\alpha_1,\dots, \alpha_n \in \H_{W,G}$ with $\alpha_k \in \H_{h_k}$ for some $h_k \in G$.

\begin{fjraxiom}[Degree]\label{a:fjr1}
The class $c_{g,n}^{W,G}(\alpha_1,\dots,\alpha_n)$ has degree
\[
2\left((\hat c -3)(1-g)+ n -\sum_{i=1}^n \iota(h_i)\right).
\]
In particular, if this number is not an integer, then the class vanishes.
\end{fjraxiom}

\begin{fjraxiom}[Line bundle degree]\label{a:line bundle}
If the numbers
\[
l_k := q_k(2g-2+n)-\sum_{i=1}^n \varTheta_k^{h_i}, \quad k=1,\dots,N
\]
are not integer the class $c_{g,n}^{W,G}(\alpha_1,\dots,\alpha_n)$ vanishes.
\end{fjraxiom}

\begin{fjraxiom}[$G_W$-invariance]
The class $c_{g,n}(\alpha_1,\dots,\alpha_n)$ is invariant under the coordinate-wise action of $G_W$ on $\H_{W,G}$.
\end{fjraxiom}

\begin{fjraxiom}[Concavity]\label{a:fjrlast}
Suppose that $\Fix(h_k) = 0$ and $l_k < 0$ for all $1 \le k \le N$, then
\begin{align*}
    c_{g,n}^{W,G}(\alpha_1,\dots,\alpha_n) &= \sum_{l=1}^N \Big[ \left( \frac{q_l^2}{2} - \frac{q_l}{2} + \frac{1}{12} \right) \kappa_1 - \sum_{i=1}^N \left(\frac{1}{12} - \frac{1}{2} \Theta_l^{k_i} (1 - \Theta_l^{h_i}) \right) \psi_i
    \\
    & + \frac{1}{2} \sum_\Gamma \left(\frac{1}{12} - \frac{1}{2} \Theta_l^{h_+} (1 - \Theta_l^{h_+}) \right) [ \overline \M(\Gamma)] \Big]
\end{align*}
where the last summation is taked over the modular graphs $\Gamma$ of genus $g$ with $n$ leaves decorated by $\alpha_1,\dots,\alpha_n$.
\end{fjraxiom}

\begin{fjraxiom}[Sum of singularities]\label{a:fjrSum}
Let $W = W_1 + W_2$ and $G = G_1 \times G_2$ for the LG models $(W_1,G_1)$ and $(W_2,G_2)$ such that $W_1, W_2$ depend on the non-intersecting sets of variables. Then $\alpha_k = \alpha_k' \alpha_k''$ and
\[
 c_{g,n}^{W,G}(\alpha_1,\dots,\alpha_n) = c_{g,n}^{W_1,G_1}(\alpha_1',\dots,\alpha_n') \cdot c_{g,n}^{W_2,G_2}(\alpha_1'',\dots,\alpha_n'')
\]
assumed as the product of cohomology classes on the moduli space of curves.
\end{fjraxiom}

In the list above we did not include such propoerties of $c_{g,n}^{W,G}$ as Index zero axiom, Symmetric Group invariance and Deformation invariance. This is just because we are not going to use them in what follows. Also the way we introduce concavity axiom is special. This is the result of additional computation made with the help of Chiodo formula. See Theorem~6.3.3 of \cite{FJR13}.

Let $\alpha_1,\dots,\alpha_\nu$ be a basis of $\H_{W,G}$ and $t_1,\dots,t_\nu$ stand for the formal variables associated to these vectors.
The Dubrovin--Frobenius manifold of the FJRW theory is fixed by the potential
\[
    \F = \F(t_1,\dots,t_\nu) := \sum_{n \ge 3} \frac{1}{n!} \int_{\overline{\M}_{0,n}} c_{0,n}^{W,G} \left(T, \dots, T \right),
\]
with $T = \sum_i t_i \alpha_i$. Namely, this is just the genus zero potential of a CohFT.

\begin{theorem}[Theorem 6.1.3 of \cite{FJR13}, Theorem 1.1 of \cite{FFJMR} and Theorem 3 of \cite{FSZ}]
\label{theorem: FJR-ADE}
    For any $k \ge 1$ let $f$ be one of
    $f = z_1^{k+1}$, $f = z_1^{2k+2} + z_1z_2^2$, $f = z_1^3 + z_2^4$, $f = z_1^3 + z_1z_2^3$, $f = z_1^3 + z_2^5$
    defining $A_k$, $D_{2k+2}$, $E_6$, $E_7$ or $E_8$ singularities respectively. Then FJRW theory of $(f,J)$ is isomorphic to Saito theory of $f$.
\end{theorem}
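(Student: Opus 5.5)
The plan is to compare the two Dubrovin--Frobenius manifolds through their correlators, using Lemma~6.2.9 of \cite{FJR13} to cut the comparison down to a finite check. By that lemma, both the FJRW potential of $(f,J)$ and the Saito potential of $f$ are determined by the pairing, the three-point correlators and the four-point correlators. So it suffices to do three things:
\begin{enumerate}
\item[(a)] find a degree-preserving linear isomorphism $\H_{f,J}\cong \Jac(f)$ that sends the pairing $\br{-,-}$ to the residue pairing $\eta$;
\item[(b)] check that it matches the three-point correlators, which amounts to identifying the FJRW product with the product of $\Jac(f)$;
\item[(c)] after rescaling the basis, match the finitely many four-point correlators that are not forced to vanish by degree.
\end{enumerate}

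First I treat the state space. I list the elements $\jw^m\in J$ and compute each fixed locus and each $\iota(\jw^m)$. For $A_k$ all sectors $\jw^m$, $m=1,\dots,k$, are narrow and one-dimensional. For the $D$ and $E$ cases, all sectors are narrow except possibly one, for which $\Fix(h)$ is positive-dimensional. I take the broad sector's $G$-invariant piece of $\sH'_h$ and identify it with the remaining element of $\Jac(f)$. The correspondence should send the narrow sector of $\jw^{m}$ to a monomial whose quasihomogeneous degree equals $\tfrac12\deg_W$. The pairing matches up to nonzero constants, because $\sH_h$ pairs with $\sH_{h^{-1}}$ just as $\phi$ pairs with its complement to the Hessian.

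For the three-point correlators, $\overline\M_{0,3}$ is a point. Axioms~\ref{a:fjr1} and \ref{a:line bundle} force most correlators to vanish. For the remaining narrow ones, all $l_k=-1$, so concavity applies and gives $c_{0,3}=1$. This reproduces the multiplication table of $\Jac(f)$, with the unit at $\jw$.

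The four-point correlators live on $\overline\M_{0,4}\cong\mathbb P^1$. For the narrow correlators that satisfy concavity, the integral is obtained by integrating the formula of Axiom~\ref{a:fjrlast}: $\kappa_1$ and each $\psi_i$ integrate to $1$, and each of the three boundary points contributes. This gives explicit rational numbers $\tfrac{1}{2}\sum_l(\dots)$. On the Saito side I compute the same four-point numbers from the flat coordinates of the primitive form $d^N\bz$ at quadratic order, following Noumi--Yamada. I then choose the scaling of each basis vector compatibly with (a) and (b) so that the finitely many numbers agree. For $A_k$ this is essentially the result of \cite{FJR13}.

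I expect the main obstacle to be the four-point correlators involving a broad insertion, and also those narrow ones where some $l_k\ge 0$, so that concavity fails. This happens in the $D$ and $E_7$ cases. There I would first try to derive the unknown correlators from WDVV~\eqref{eq:wdvv}, combined with the already-known concave ones and the $G_W$-invariance axiom. Invariance kills correlators with an odd number of broad insertions, and WDVV relations of the right degree then express the remaining broad four-point correlators in terms of concave ones. If WDVV leaves a genuine undetermined constant, I would compute it via the matrix-factorization description of broad sectors, as in \cite{FFJMR}, or via the Chiodo-class computation used in \cite{FSZ}. In either case the value only has to be shown nonzero, since a nonzero value can then be absorbed by rescaling the broad generator.
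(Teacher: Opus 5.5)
The paper gives no proof of this statement; it quotes it from \cite{FJR13}, \cite{FFJMR} and \cite{FSZ}. Your outline is the reconstruction strategy of \cite{FJR13}, but the steps you treat as routine are where the work lies, and two of your claims are false.

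Note first that the literal polynomial $z_1^{2k+2}+z_1z_2^2$ is $D_{2k+3}$. There $J=G_f$ and $\dim\H_{f,J}=4k+3\neq 2k+3$, so step (a) cannot be done. The intended polynomial is $z_1^{2k+1}+z_1z_2^2$. Its broad identity sector is two-dimensional, spanned by $[z_1^k\,dz_1\wedge dz_2]$ and $[z_2\,dz_1\wedge dz_2]$, not a single ``remaining element''.

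Next, it is not true that every narrow three-point correlator allowed by the selection rules has all $l_k=-1$. In genus zero with three points one has $\hat c-\sum_i\iota(h_i)=\sum_k(l_k+1)$, but narrow insertions only give $l_k\le 0$. Take $f=z_1^7+z_1z_2^2$ ($D_8$). The sector $\jw^5=(\frac57,\frac17)$ is narrow, and $\langle\jw^5,\jw^5,\jw^5\rangle_{0,3}$ has $(l_1,l_2)=(-2,0)$. It is mirror to $z_1^2\cdot z_1^2\cdot z_1^2=z_1^6\neq 0$, so it must be nonzero. Concavity does not apply, and you need further input, e.g.\ the Index Zero axiom.

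Moreover, broad sectors already enter the multiplication table. For $E_7$, the product $\jw^5\circ\jw^5$ (mirror to $z_2\cdot z_2$) has degree $\hat c/2$, so it lies in the broad sector $\CC[z_2^2\,dz_1\wedge dz_2]$. For $D_{2k+2}$ with $k\ge 2$, the same happens for $\jw^{2k-1}\circ\jw^{3}$ (mirror to $z_1^{k-1}\cdot z_1$). The required nonzero broad correlators, such as $\langle\jw^5,\jw^5,[z_2^2\,dz_1\wedge dz_2]\rangle_{0,3}$, are never computed. So step (b) is not established.

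Your parity claim is also wrong. For $E_7$ one has $J=G_f$, so $G_W$-invariance adds nothing beyond what is built into $\H_{f,J}$. For $D_{2k+2}$, the quotient $G_f/J\cong\mathbb Z/2$ acts by $-1$ on $[z_1^k\,dz_1\wedge dz_2]$ but trivially on $[z_2\,dz_1\wedge dz_2]$. So only correlators with an odd number of the former insertions vanish.

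Finally, the decisive input is not supplied. For $D_4$, every four-point correlator that can be nonzero has the form $\langle\beta,\beta',\beta'',\jw^2\rangle_{0,4}$, with three broad insertions. This is exactly the case that \cite{FJR13} left open and \cite{FFJMR} was written to settle, and your fallback simply cites that paper. The Chiodo-class route you mention is not available. Chiodo's formula gives the FJRW class only in the concave case, which requires all insertions to be narrow. As it stands, the proposal reduces to the cited references at every nontrivial point rather than proving the statement.
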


The theorem above is a LG-LG mirror symmetry theorem for $A_k$, $E_6$ and $E_8$ type singularities but is not a LG-LG mirror symmetry theorem for $D_{2k+2}$ and $E_7$ type singularities because in the latter cases $f \neq \widetilde f$ and $\widetilde J$ is not a trivial group.

In the theorem above it is very important that the polynomials defining the singularities are fixed. In particular, we will see later on that FJRW theories of $(z_1^k,J)$ and $(z_1^k + z_2^2,J)$ are not isomorphic even though the two polynomials are related by the stabilization (see Section~\ref{section: LG models}).

\section{FJRW theory computations}\label{section: proof of part ii}

In this section we prove part (ii) of Theorem~\ref{theorem: explicit}. Comparing to the proof of part (i) of this theorem we need to do the computations case by case. We also work with the Dubrovin--Frobenius manifold by themselves rather than the VSHS.

We use the reconstruction Lemma~\ref{lemma: reconstruction in genus zero}. Denote by $\A_{\widetilde f, \widetilde G}$ the corresponding $\CC$--algebra given by FJRW correlators.
In order to apply reconstruction lemma we need to find first the primitive vectors and the number $N_{min}$. This data is fully determined by the algebra $\A_{\widetilde f, \widetilde G}$ and the quasihomogeneity. In all our cases the algebra isomorphism $\A_{\widetilde f, \widetilde G} \cong \Jac(f, G)$ holds by \cite{FJJS}. By \cite{BTW23,BT20} we have $\Jac(f, G) \cong \Jac(\overline f)$. We use this to figure out primitive elements and the number $N_{min}$.

The proof is given case by case following the numeration of Table~\ref{tableDuals}.

FJRW theory is not easy to compute. Most of the results computing FJRW theory are for the maximal symmetry groups $G_f$, exceptions being given by \cite{F12,B22,G17,FHS23}.
In this note we need indeed to compute it with the nonmaximal groups. Like in the loc.cit. main tool of ours to compute broad insertions is WDVV equation.

\begin{table}[h]
{\small
\begin{center}
\begin{tabular}{l||c|c||c| c}
&$f$ & $G$ & $\widetilde{f}$ & $\widetilde{G}$
\\
\hline
1.&$z_1^{k+1}+z_2^2+z_3^2$,\quad $k \ge  1$ & $\left<\frac{1}{2}(0,1,1)\right>$ & $z_1^{k+1}+z_2^2+z_3^2$ & $\langle (\frac{1}{k+1},0,0) \rangle \times \langle (0,\frac{1}{2},\frac{1}{2}) \rangle$
\\
2.&$z_1^{2k}+z_2^2+z_3^2$,\quad $k \ge  1$ & $\left<\frac{1}{2}(1,0,1)\right>$ & $z_1^{2k}+z_2^2+z_3^2$ & $\langle (\frac{1}{2k},0,\frac{1}{2}) \rangle \times \langle (0,\frac{1}{2},0) \rangle$
\\
3.&$z_1^{2k}+z_2^2+z_3^2$,\quad $k \ge  1$ & $\left<\frac{1}{2}(0,1,1),\frac{1}{2}(1,0,1)\right>$ & $z_1^{2k}+z_2^2+z_3^2$ & $J_{\widetilde{f}}$
\\
4.&$z_1^{3}+z_2^3+z_3^2$ & $\left<\frac{1}{3}(1,2,0)\right>$ & $z_1^{3}+z_2^3+z_3^2$ & $J_{\widetilde{f}}$
\\
5.&$z_1^{4}+z_2^3+z_3^2$ & $\left<\frac{1}{2}(1,0,1)\right>$ & $z_1^{4}+z_2^3+z_3^2$ & $J_{\widetilde{f}}$
\\
6.&$z_1^2+z_2^2+z_2z_3^{2k}$,\quad $k \ge  1$ & $\left<\frac{1}{2}(1,0,1)\right>$ &  $z_1^2+z_2^2z_3+z_3^{2k}$ & $J_{\widetilde{f}}$
\\
7.&$z_1^2+z_2^2+z_2z_3^{2k+1}$,\quad $k \ge  1$ & $\left<\frac{1}{2}(0,1,1)\right>$ & $z_1^2+z_2^2z_3+z_3^{2k+1}$ & $\langle (\frac{1}{2},0,0) \rangle \times \langle (0,\frac{4k+1}{4k+2},\frac{1}{2k+1}) \rangle$
\\
8.&$z_1^2+z_2^{k-1}+z_2z_3^2$,\quad $k \ge  4$ & $\left<\frac{1}{2}(1,0,1)\right>$ & $z_1^2+z_2^{k-1}z_3+z_3^2$ & $J_{\widetilde{f}}$
\\
\hline
\end{tabular}
\end{center}
}
\smallskip
\caption{The pairs $(\widetilde f, \widetilde G)$}\label{tableDuals}
\end{table}

\subsection{Case 1}
We have to compute FJRW theory of the polynomial $f = \widetilde f = z_1^{k+1} + z_2^2 + z_3^2$ with the symmetry group $\widetilde G = \langle (\frac{1}{k+1},0,0) \rangle \times \langle (0,\frac{1}{2},\frac{1}{2}) \rangle$.

We have $f = f_1 + f_2$ for $f_1 = z_1^{k+1}$ and $f_2 = z_2^2 + z_3^2$. By the sum of singularities axiom, FJRW theory of $(\widetilde f,\widetilde G)$ is isomorphic to the product of the FJRW theories of $(f_1,J_{f_1})$ and $(f_2, J_{f_2})$. The first theory is isomorphic to Saito theory of $A_k$ by Theorem~\ref{theorem: FJR-ADE}.
FJRW theory of $(f_2, J_{f_2})$ is two--dimemsional, isomorphic to the product of Saito theories $A_1 \times A_1$.

This completes the proof.

\subsection{Case 2}
We have to compute FJRW theory of the polynomial $f = \widetilde f = z_1^{2k} + z_2^2 + z_3^2$ with the symmetry group $\widetilde G = \langle (\frac{1}{2k},0,\frac{1}{2}) \rangle \times \langle (0,\frac{1}{2},0) \rangle$.

By the sum of singularities axiom and triviality of the $2$--spin FJRW theory, it's enough to compute FJRW theory of the polynomial $f(z_1, z_2) = z_1^{2k} + z_2^2$ with the symmetry group $J$.
It's known that $\A_{\widetilde f,\widetilde G} \cong \Jac(D_{k+1})$ (see \cite{FJR13, FJJS}) but we check that again.

\subsubsection{State Space}
We have
\begin{equation*}
    \hat{c} = \frac{k-1}{k}, 
    \quad \H_{f, J} = \bigoplus_{c=1}^{2k} \H_{f, \jw^c}
\end{equation*}

For $c = 2a$ with $a = 1,\dots,k-1$ we have $\H_{f, \jw^c} = 0$.
When $c = 2a+1$ for $a = 0, \dots, k-1$, the fixed locus of $\jw^c$ is the origin and $\H_{f, \jw^c} \cong \CC$. We denote the corresponding basis vector by $\phi_a$. The W--degree is:
\begin{equation*}
    \deg(\phi_a) = \frac{2a+1}{2k} - \frac{1}{2k} + \frac{1}{2} - \frac{1}{2} = \frac{a}{k}.
\end{equation*}

The sector $c = 2k$ is broad with the fixed locus $\CC^2$. Imposing $J$-invariance we have $\H_{f, \jw^{2k}} \cong \CC$ generated by $[z_1^{k-1}dz_1\wedge dz_2]$ that we denote by $\tau$. Its degree is:
\begin{equation*}
    \deg(\tau) = \frac{k-1}{2k} + 1 - \frac{1}{2k} - \frac{1}{2} = \frac{k-1}{2k}.
\end{equation*}

Fix the basis for the state space $\{\phi_0, \dots, \phi_{k-1}, \tau\}$.

\subsubsection{Frobenius Algebra}
By the degree axiom a three-point primary correlator $\langle \gamma_1, \gamma_2, \gamma_3 \rangle_{0,3}$ is non-zero only if $\sum_{i=1}^3 \deg(\gamma_i) = \hat{c} = \frac{k-1}{k}$.
The only non-zero three-point correlators are:
\begin{align*}
    & \langle \phi_a, \phi_b, \phi_c \rangle_{0,3} = 1 \quad \text{if} \quad a+b+c = k-1, \\
    & \langle \phi_0, \tau, \tau \rangle_{0,3} = 1.
\end{align*}
Note that these formulae also fix the pairing because $\phi_0$ is the unit by the construction.

This gives $\phi_a \circ \phi_b = \phi_{a+b}$, $\phi_a \circ \tau = 0$ for $a > 0$, and $\tau \circ \tau = \phi_{k-1}$.

\subsubsection{Dubrovin--Frobenius manifold}
Primitive are $\phi_1$, $\tau$ and $N_{min} = 5$. Repeating word by word the proofs of Lemma~6.2.9 and Theorem 6.2.10 part (3) from \cite{FJR13} we get that genus zero FJRW theory we care about is uniquely determined by the pairing, three-point and the following four-point correlators
\[
    \langle \phi_1, \phi_1, \phi_1^{k-1}, \phi_1^{k-2} \rangle_{0,4} \quad \text{and} \quad \langle \tau,  \tau, \phi_1, \phi_1^{2} \rangle_{0,4} \ \text{if} \ k=3.
\]
These correlators satisfy selection rule axiom. 
The first correlator is concave evaluating to
\[
    \langle \phi_1, \phi_1, \phi_1^{k-1}, \phi_1^{k-2} \rangle_{0,4}  = \frac{1}{2k}.
\]

For the case $k=3$ we have one more correlator to compute. It involves broad insertions. From the WDVV equation we have for any vector $Y$:
\begin{align*}
    & \sum_X \left( \langle \tau,\tau,Y,X \rangle_{0,4} \langle \overline{X}, \phi_1, \phi_1 \rangle_{0,3}
    + \langle \tau,\tau,X \rangle_{0,3} \langle \overline{X}, Y, \phi_1, \phi_1 \rangle_{0,4} \right)
    \\
    &\quad =
    \sum_X \left( \langle \tau,\phi_1,Y,X \rangle_{0,4} \langle \overline{X}, \tau, \phi_1 \rangle_{0,3}
    + \langle \tau,\phi_1,X \rangle_{0,3} \langle \overline{X}, Y, \tau, \phi_1 \rangle_{0,4} \right),
\end{align*}
where $\overline X$ stands for the dual to $X$ with respect to the pairing.

Take $Y := \phi_{k-2}$.
Both summands of the second line vanish due to the selection rule axiom. The summation of the first line simplifies to
\begin{align*}
    & \langle \tau,\tau,\phi_{k-2},\phi_2 \rangle_{0,4} \langle \phi_{k-3}, \phi_1, \phi_1 \rangle_{0,3} + \langle \tau,\tau,\phi_0 \rangle_{0,3} \cdot \frac{1}{2k} = 0.
\end{align*}
Taking $k=3$ we arrive at
\[
    \langle \tau,\tau,\phi_1,\phi_2 \rangle_{0,4}
    = - \frac{1}{6}.
\]

This follows now from part (3) of Theorem~6.2.10 in \cite{FJR13}
that FJRW theory in question is isomorphic to Saito theory of $D_{k+1}$.
%%%%%%%%%%%%%%%%%%%%%%%%%%%%%%%%%%%%%%%%%%%%%%%%%%%%%%%%%%%%%%%%%%%%%%%%%%%%%%%%%%%%%%
%%%%%%%%%%%%%%%%%%%%%%%%%%%%%%%%%%%%%%%%%%%%%%%%%%%%%%%%%%%%%%%%%%%%%%%%%%%%%%%%%%%%%%
%%%%%%%%%%%%%%%%%%%%%%%%%%%%%%%%%%%%%%%%%%%%%%%%%%%%%%%%%%%%%%%%%%%%%%%%%%%%%%%%%%%%%%

\subsection{Case 3}
We compute FJRW of $f = \widetilde f = z_1^{2k} + z_2^2 + z_3^2$ with the symmetry group $J$.

\subsubsection{State Space and Frobenius Algebra}
The central charge is:
\begin{equation*}
    \hat{c} = 1 - \frac{2}{2k} = \frac{k-1}{k}.
\end{equation*}
The state space $\H_{f, J}$ has a complex dimension of $2k-1$. We fix the basis
\begin{equation*}
    \{ \phi_0, \phi_1, \dots, \phi_{2k-2} \}.
\end{equation*}
The W--degrees are:
\begin{equation*}
    \deg(\phi_a) = \frac{a}{2k}, \quad \text{for } a = 0, \dots, 2k-2.
\end{equation*}

The product $\circ$ and three-point primary correlators map identically to the standard $A_{2k-1}$ local algebra governed by the dimension constraint $\sum_{i=1}^3 \deg(\phi_i) = \hat{c}$.
The only non-zero three-point correlators are
\begin{align*}
    & \langle \phi_a, \phi_b, \phi_c \rangle_{0,3} = 1 \text{ where } a+b+c = 2k-2,
\end{align*}
what gives the product $\phi_a \circ \phi_b = \phi_{a+b}$.

\subsubsection{Dubrovin--Frobenius manifold}
Primitive is only $\phi_1$ and $N_{min} = 4$. Genus zero FJRW theory in question is uniquely determined by the pairing, three-point and the following four-point correlator
\[
    \langle \phi_1, \phi_1, \phi_1^{2k-2}, \phi_1^{2k-2} \rangle_{0,4}.
\]
This correlator satisfies selection rule axiom. It is concave evaluating to $-\frac{1}{2k}$.

This follows now from part (1) of Theorem~6.2.10 in \cite{FJR13}
that FJRW theory in question is isomorphic to Saito theory of $A_{2k-1}$.

%%%%%%%%%%%%%%%%%%%%%%%%%%%%%%%%%%%%%%%%%%%%%%%%%%%%%%%%%%%%%%%%%%%%%%%%%%%%%%%%%%%%%%
%%%%%%%%%%%%%%%%%%%%%%%%%%%%%%%%%%%%%%%%%%%%%%%%%%%%%%%%%%%%%%%%%%%%%%%%%%%%%%%%%%%%%%
%%%%%%%%%%%%%%%%%%%%%%%%%%%%%%%%%%%%%%%%%%%%%%%%%%%%%%%%%%%%%%%%%%%%%%%%%%%%%%%%%%%%%%

\subsection{Case 4}
We compute FJRW of $f = \widetilde f = z_1^{3} + z_2^3 + z_3^2$ with the symmetry group $J$.

Note that $\widetilde f = f_1 + f_2$ for $f_1 = z_1^3 + z_2^3$, $f_2 = z_3^2$ and $J_{\widetilde f} = J_{f_1} \times J_{f_2}$. By the sum of singularities axiom FJRW theory of $(\widetilde f, J)$ is the product of FJRW theories of $(f_1,J_{f_1})$ and $(f_2,J_{f_2})$. The latter theory is trivial. The polynomial $f_1$ is right--left equivalent by a quasihomogeneous transformation to the polynomial $x^3 + xy^2$. This polynomial defines $D_4$ singularity and the FJRW theory of it with the group $J$ was investigated in \cite{FFJMR}. This follows from Theorem~4.5 and Lemma~5.2 in loc.cit. that FJRW theory in question is isomorphic to Saito theory of $D_4$ singularity.

%%%%%%%%%%%%%%%%%%%%%%%%%%%%%%%%%%%%%%%%%%%%%%%%%%%%%%%%%%%%%%%%%%%%%%%%%%%%%%%%%%%%%%
%%%%%%%%%%%%%%%%%%%%%%%%%%%%%%%%%%%%%%%%%%%%%%%%%%%%%%%%%%%%%%%%%%%%%%%%%%%%%%%%%%%%%%
%%%%%%%%%%%%%%%%%%%%%%%%%%%%%%%%%%%%%%%%%%%%%%%%%%%%%%%%%%%%%%%%%%%%%%%%%%%%%%%%%%%%%%

\subsection{Case 5}
We compute FJRW of $f = \widetilde f = z_1^4 + z_2^3 + z_3^2$ with the symmetry group $J$.

The state space $\H_{\widetilde f,J}$ is 6-dimensional, $\widehat c = \frac{1}{3}$.

The direct summands $\H_{\jw^1}$, $\H_{\jw^5}$, $\H_{\jw^7}$, $\H_{\jw^{11}}$ are narrow and hence $1$--dimensional. Denote their generators by $\alpha_1, \alpha_3,\alpha_4,\alpha_6$ respectively.
Additionally let $\alpha_2 = [z_1 dz_1 \wedge dz_2]$ be the generator of $\H_{\jw^4}$ and $\alpha_5 = [z_1 dz_1 \wedge dz_2]$ be the generator of $\H_{\jw^8}$.

\subsubsection{Frobenius algebra}
The non-zero 3-point correlators are:
\[
     \langle \alpha_1, \alpha_1, \alpha_6 \rangle_{0,3} = 1, \ \langle \alpha_1, \alpha_2, \alpha_5 \rangle_{0,3} = 1, \ \langle \alpha_1, \alpha_3, \alpha_4 \rangle_{0,3} = 1, \ \langle \alpha_2, \alpha_2, \alpha_3 \rangle_{0,3} = 1.
\]
This gives the product structure isomorphic to $\mathbb{C}[x,y]/(x^3, y^2)$ by mapping $\alpha_2 \mapsto x$ and $\alpha_3 \mapsto y$.
Primitive are $\alpha_2,\alpha_3$ and $N_{min} = 4$. By the section rule and degree axiom the following basic correlators are enough to define the genus zero theory
\[
    c_1 := \langle \alpha_2, \alpha_2, \alpha_2^2, \alpha_2^2\alpha_3\rangle_{0,4}, \quad
    c_2 := \langle \alpha_3, \alpha_3, \alpha_2\alpha_3, \alpha_2\alpha_3 \rangle_{0,4}.
\]

\begin{lemma}
    We have $c_1^2 = 1$ and $c_2 = \frac{1}{3}$.
\end{lemma}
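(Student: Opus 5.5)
The plan is to reduce both $c_1$ and $c_2$, which contain broad insertions ($\alpha_2,\alpha_5$ sit in the broad sectors $\H_{\jw^4},\H_{\jw^8}$), to correlators with only narrow insertions. The latter can be computed by the concavity axiom. The tool for the reduction is the WDVV equation~\eqref{eq:wdvv}, exactly as in Case~2.

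First I will fix the identifications inside $\A_{\widetilde f,J}\cong \CC[x,y]/(x^3,y^2)$ coming from the listed $3$-point correlators. These are $\alpha_2\circ\alpha_2=\alpha_4$ (since $\langle\alpha_2,\alpha_2,\alpha_3\rangle=1$ and $\alpha_3$ is dual to $\alpha_4$), $\alpha_2\circ\alpha_3=\alpha_5$ up to the normalisation fixed by the pairing, and $\alpha_2^2\alpha_3=\alpha_6$. In this dictionary $c_1=\langle\alpha_2,\alpha_2,\alpha_4,\alpha_6\rangle_{0,4}$ and $c_2=\langle\alpha_3,\alpha_3,\alpha_5,\alpha_5\rangle_{0,4}$. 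Next I will list all $4$-point correlators allowed by the degree axiom and the line bundle degree axiom. For each one whose insertions are all narrow (from $\alpha_1,\alpha_3,\alpha_4,\alpha_6$), I will check whether every $l_k<0$, and if so evaluate it with the concavity axiom. On $\overline\M_{0,4}$ this is a short computation: $\kappa_1$, the $\psi_i$ and the three boundary points each integrate to $1$. I expect $\langle\alpha_3,\alpha_3,\alpha_4,\alpha_4\rangle_{0,4}$ (or a similar narrow correlator) to be concave and nonzero.

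Then I will write the WDVV equation~\eqref{eq:wdvv} for suitable quadruples in which primitive insertions are moved across. Natural choices are $(i,j,k,\ell)=(\alpha_2,\alpha_2,\alpha_3,\alpha_3)$, which compares $\alpha_2\circ\alpha_2=\alpha_4$ with $\alpha_2\circ\alpha_3=\alpha_5$, and $(\alpha_2,\alpha_3,\alpha_2,\alpha_4)$. In each equation only the terms of the form $\langle\cdot,\cdot,\cdot\rangle_{0,3}\eta^{ab}\langle\cdot,\cdot,\cdot,\cdot\rangle_{0,4}$ survive, since $\circ$ is graded. This yields linear relations between $c_2$, $c_1$ and the concave narrow correlator, which should give $c_2=\frac13$. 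For $c_1$ I expect only a quadratic relation. This is the reason the lemma asserts $c_1^2=1$ rather than $c_1$. One route to it is a WDVV equation with a $5$-point correlator eliminated. Another, more likely, is that the rescaling $\alpha_2\mapsto-\alpha_2$ (induced by $G_W$-invariance, $z_1\mapsto \im z_1$) flips the sign of $c_1$, so that only $c_1^2$ is an invariant.

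The main obstacle is bookkeeping. I must identify precisely which $4$-point narrow correlators are concave, keeping track of the $\Theta$'s of $\jw^h$ for $h=1,5,7,11$. I must also choose WDVV quadruples in which the unknown broad correlators appear with nonzero coefficients, while all other $4$-point terms vanish by the selection rules. If the first choice of quadruple only produces $0=0$, I will try the quadruple obtained by replacing one $\alpha_3$ with $\alpha_4$, as was done with $Y=\phi_{k-2}$ in Case~2.
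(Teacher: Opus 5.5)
Your plan has a genuine gap at $c_1$: nothing in it can produce the value of $c_1^2$.

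The symmetry explanation is wrong. The map $z_1\mapsto \im z_1$ sends $\alpha_2\mapsto-\alpha_2$ and $\alpha_5\mapsto-\alpha_5$, and it fixes the narrow generators. So $c_1$, which contains $\alpha_2$ twice, is unchanged. Had some element of $G_W$ flipped its sign, the $G_W$-invariance axiom would force $c_1=0$; it would not make $c_1^2$ the invariant. In any case, no symmetry argument yields the number $1$.

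Four-point information cannot fix $c_1$ either. Consider the rescaling $\alpha_2\mapsto s\alpha_2$, $\alpha_3\mapsto s^{-2}\alpha_3$, hence $\alpha_4\mapsto s^2\alpha_4$, $\alpha_5\mapsto s^{-1}\alpha_5$, $\alpha_6\mapsto\alpha_6$. It preserves the unit, the pairing and all three-point correlators. Therefore each first-order WDVV relation links only four-point correlators of equal weight. The weight-$s^4$ correlators are $c_1$, $\langle\alpha_2,\alpha_4,\alpha_4,\alpha_5\rangle_{0,4}$ and the narrow $\langle\alpha_3,\alpha_4,\alpha_4,\alpha_4\rangle_{0,4}$. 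The last one is concave of rank one but vanishes, because its $z_1$-factor is a $4$-spin class with an identity insertion. So these relations are homogeneous in $c_1$; your quadruple $(\alpha_2,\alpha_3,\alpha_2,\alpha_4)$ only gives $\langle\alpha_2,\alpha_4,\alpha_4,\alpha_5\rangle_{0,4}=c_1$.

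The missing ingredient is a five-point narrow correlator that must be evaluated, not eliminated. Differentiating \eqref{eq:wdvv} for $(i,j,k,\ell)=(\alpha_6,\alpha_4,\alpha_2,\alpha_2)$ twice in $t_{\alpha_4}$ gives $\langle\alpha_4,\alpha_4,\alpha_4,\alpha_4,\alpha_6\rangle_{0,5}=2c_1\langle\alpha_2,\alpha_4,\alpha_4,\alpha_5\rangle_{0,4}=2c_1^2$. This correlator is concave, with line bundle degrees $-3,-1,-1$. However, it is the top Chern class of a rank-two bundle on $\overline{\M}_{0,5}$, which lies outside the $\overline{\M}_{0,4}$ computation you budget for. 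The sign of $c_1$ is genuinely left open by WDVV together with the narrow data, since both signs are consistent. That, and not $G_W$-invariance, is why only $c_1^2$ is asserted.

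For $c_2$ your method does work, but not with the equations you name. The quadruple $(\alpha_2,\alpha_2,\alpha_3,\alpha_3)$ yields $0=0$, because the extra insertion would need degree $\frac23$ and no such class exists. The other two quadruples give only the relation above. Also, $\langle\alpha_3,\alpha_3,\alpha_4,\alpha_4\rangle_{0,4}$ is killed by the degree axiom. The relation you need is \eqref{eq:wdvv} for $(\alpha_3,\alpha_3,\alpha_2,\alpha_5)$, differentiated in $t_{\alpha_3}$. It gives $c_2=\langle\alpha_3,\alpha_3,\alpha_3,\alpha_6\rangle_{0,4}$, a concave correlator equal to the $3$-spin value $\frac13$.

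The paper avoids WDVV in the six-dimensional space altogether. It splits $\widetilde f=(z_1^4+z_3^2)+z_2^3$ with $J_{\widetilde f}=J_{f_1}\times J_{f_2}$. Then $c_2$ is directly $\langle A,A,C\rangle\langle A,B,B\rangle\langle e_2,e_2,e_2,e_2\rangle=\frac13$. For the other correlator, $c_1=\lambda=\langle B,B,C,C\rangle_{0,4}$, and WDVV in the three-dimensional $(z_1^4+z_3^2,J)$ theory gives $\lambda^2=\frac12\langle C,C,C,C,C\rangle_{0,5}$. Your repaired route is the unsplit version of this and rests on the same number, since $\langle\alpha_4,\alpha_4,\alpha_4,\alpha_4,\alpha_6\rangle_{0,5}=\langle C,C,C,C,C\rangle_{0,5}$ by the sum axiom.

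That number deserves a check. It is the $4$-spin correlator $\langle\phi_2,\dots,\phi_2\rangle_{0,5}$, and the $A_3$ WDVV equation forces it to equal $2\langle\phi_1,\phi_1,\phi_2,\phi_2\rangle_{0,4}^2$. In the normalization that gives $c_2=\frac13$, this is $\frac18$, not the $2$ the paper uses.
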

\begin{proof}
In order to compute these correlators note that $\widetilde f = f_1 + f_2$ for $f_1 = z_1^4 + z_3^2$, $f_2 = z_2^3$ and $J_{\widetilde f} = J_{f_1} \times J_{f_2}$. By the sum of singularities axiom FJRW theory of $(\widetilde f, J)$ can be computed via the product of FJRW theories of $(f_1,J_{f_1})$ and $(f_2,J_{f_2})$. The latter theory is fully computed by Theorem~\ref{theorem: FJR-ADE} and we compute the first one.

{\bf Step 1.}
The state space $\H_{f_1,J}$ is three--dimensional generated by $A = 1$ in the $\jw$--sector, $B = [z_1 dz_1\wedge dz_3]$ in the $\jw^0$--sector and $C = 1$ in the $\jw^3$--sector. Three--potin correlators are all known by \cite{FJJS}. By the degree and selection rule axioms the higher puncture correlators that can be non--zero are the folllowing
\[
 \langle B, B, C, C \rangle_{0,4}, \quad \langle C, C, C, C, C \rangle_{0,5}.
\]
The five-point correlator above can be computed by the concavity axiom to be equal to $2$. Associating to $A, B, C$ the variables $t_A,t_B,t_C$ we get the following Dubrovin-Frobenius potential
\[
\F(t_A,t_B,t_C) = \frac{1}{2} t_A^2 t_C + \frac{1}{2} t_A t_B^2 + \frac{\lambda}{4} t_B^2 t_C^2 + \frac{1}{60}t_C^5,
\]
where $\lambda$ is the value of the four--point correlator above. WDVV equation on $\F$ above is equivalent to $\lambda^2 = 1$.

{\bf Step 2.}
Let $e_1,e_2$ be the standard basis vectors of the $3$--spin FJRW theory.
By the sum of singularities axiom we have the following equality in the cohomology ring $H^\ast(\overline{\M}_{0,4})$
\begin{align*}
    c_{0,4}^{\widetilde f}(\alpha_2, \alpha_2, \alpha_2^2, \alpha_2^2\alpha_3) =
    c_{0,4}^{f_1}(B, B, C, C) \cdot c_{0,4}^{f_2}(e_1, e_1, e_1, e_2).
\end{align*}
Decomposing the degree $1$ class on the left hand side as the product of degree $1$ and $0$ classes on the right hand side we get.
\begin{align*}
    \langle \alpha_2, \alpha_2, \alpha_2^2, \alpha_2^2\alpha_3 \rangle^{\widetilde f}_{0,4}
    =
    \langle B, B, C, C \rangle_{0,4}^{f_1} \cdot \langle e_1, e_1, e_2 \rangle_{0,3}^{f_2} \langle e_1,e_1, e_2\rangle_{0,3}^{f_2} = \lambda.
\end{align*}
Similarly
\begin{align*}
     c_{0,4}^{\widetilde f}(\alpha_3, \alpha_3, \alpha_2\alpha_3, \alpha_2\alpha_3)
    =  c_{0,4}^{f_1}(A, A, B, B) \cdot c_{0,4}^{f_2}(e_2, e_2, e_2, e_2).
\end{align*}
Decomposing the degree $1$ class on the left hand side as the product of degree $0$ and $1$ classes on the right hand side we get.
\begin{align*}
     \langle \alpha_3, \alpha_3, \alpha_2\alpha_3, \alpha_2\alpha_3 \rangle^{\widetilde f}_{0,4}
     =
     \langle A, A, C \rangle_{0,3}^{f_1}
     \langle A, B, B \rangle_{0,3}^{f_1} \cdot
     \langle e_2, e_2, e_2, e_2 \rangle_{0,4}^{f_2} = \frac{1}{3}.
\end{align*}
\end{proof}

Applying the rescaling $\alpha_2 \to \lambda 2^{1/2} \alpha_2$, $\alpha_4 \to 2 \alpha_4$ and $\alpha_5 \to \lambda 2^{1/2} \alpha_5$ we get the correlators that coincide with those for $E_6$ FJRW theory with the group $J$ as computed in \cite{FJR13} after the division by two. In the other words two Dubrovin--Frobenius manifold structures coincide, but the potentials of them are connected by the multiplication by $2$.

This follows now from part (4) of Theorem~6.2.10 in \cite{FJR13}
that FJRW theory in question is isomorphic to Saito theory of $E_6$.

%%%%%%%%%%%%%%%%%%%%%%%%%%%%%%%%%%%%%%%%%%%%%%%%%%%%%%%%%%%%%%%%%%%%%%%%%%%%%%%%%%%%%%
%%%%%%%%%%%%%%%%%%%%%%%%%%%%%%%%%%%%%%%%%%%%%%%%%%%%%%%%%%%%%%%%%%%%%%%%%%%%%%%%%%%%%%
%%%%%%%%%%%%%%%%%%%%%%%%%%%%%%%%%%%%%%%%%%%%%%%%%%%%%%%%%%%%%%%%%%%%%%%%%%%%%%%%%%%%%%

\subsection{Case 6}
We compute FJRW theory of $\widetilde f = z_1^2 + z_2^2 + z_2 z_3^{2k}$ with the symmetry group~$J$.

Applying to $\widetilde f$ the coordinate transformation $w_2 = z_2 + \frac{1}{2} z_3^{2k}$, we get the polynomial:
\begin{equation*}
    W(z_1, w_2, z_3) = z_1^2 + w_2^2 - \frac{1}{4} z_3^{4k}.
\end{equation*}
This coordinate transformation is quasihomogeneous and therefore FJRW theories of $(\widetilde f, J_{\widetilde f})$ and of $(W, J_W)$ are isomorphic.

The FJRW theory $(W, J_W)$ was investigated in Case~(2) above.

%%%%%%%%%%%%%%%%%%%%%%%%%%%%%%%%%%%%%%%%%%%%%%%%%%%%%%%%%%%%%%%%%%%%%%%%%%%%%%%%%%%%%%
%%%%%%%%%%%%%%%%%%%%%%%%%%%%%%%%%%%%%%%%%%%%%%%%%%%%%%%%%%%%%%%%%%%%%%%%%%%%%%%%%%%%%%
%%%%%%%%%%%%%%%%%%%%%%%%%%%%%%%%%%%%%%%%%%%%%%%%%%%%%%%%%%%%%%%%%%%%%%%%%%%%%%%%%%%%%%

\subsection{Case 7}
Compute FJRW theory of $\widetilde f = z_1^2+z_2^2z_3+z_3^{2k+1}$ with the symmetry group $\widetilde G: = G_1 \times G_2$, where $G_1 := \langle (\frac{1}{2},0,0) \rangle$ and $G_2 := \langle (0,-\frac{1}{2(2k+1)},\frac{1}{2k+1}) \rangle$.

By the triviality of 2--spin FJRW theory and sum of singularities axiom, FJRW theory of $(\widetilde f,\widetilde G)$ is equivalent to FJRW theory of $(W,J_W)$ with $W := z_2^2z_3+z_3^{2k+1}$.

Such polynomial defines $D_{2k+2}$ type singularity.
The proof now follows from Theorem~\ref{theorem: FJR-ADE}.

\subsection{Case 8}
We compute FJRW theory of $\widetilde f = z_1^2 + z_2^{k-1}z_3 + z_3^2$ with the symmetry group $J$.
Consider the coordinate transformation
\begin{equation*}
     u = z_3 + \frac{1}{2}z_2^{k-1}.
\end{equation*}
In the coordinates $z_1,z_2,u$ the polynomial given reads
\begin{equation*}
    W = z_1^2 - \frac{1}{4}z_2^{2k-2} + u^2.
\end{equation*}
This coordinate change is quasihomogeneous. Hence FJRW theory of $(\widetilde f,J_{\widetilde f})$ is equivalent to FJRW theory of $(W,J_W)$. The latter was investigated in Case~3 above.

%%%%%%%%%%%%%%%%%%%%%%%%%%%%%%%%%%%%%%%%%%%%%%%%%%%%%%%%%%%%%%%%%%%%%%%%%%%%%%%%%%%%%%
%%%%%%%%%%%%%%%%%%%%%%%%%%%%%%%%%%%%%%%%%%%%%%%%%%%%%%%%%%%%%%%%%%%%%%%%%%%%%%%%%%%%%%
%%%%%%%%%%%%%%%%%%%%%%%%%%%%%%%%%%%%%%%%%%%%%%%%%%%%%%%%%%%%%%%%%%%%%%%%%%%%%%%%%%%%%%

\end{document}